\newtheorem{cor}{Corollary}[section]
\newtheorem{te}[cor]{Theorem}
\newtheorem{p}[cor]{Proposition}
\newtheorem{lemma}[cor]{Lemma}
\theoremstyle{definition}
\newtheorem{de}[cor]{Definition}
\theoremstyle{remark}
\newtheorem{ob}[cor]{Observation}
\newtheorem{ex}[cor]{Exemple}
\newtheorem{nt}[cor]{Notation}
\newcommand{\cz}{\mathbb{C}}
\newcommand{\nz}{\mathbb{N}}
\newcommand{\zz}{\mathbb{Z}}
\newcommand{\ff}{\mathbb{F}}
\newcommand{\bb}{\mathcal{B}}
\newcommand{\gr}{\mathcal{G}}
\newcommand{\nr}{\mathcal{N}}
\newcommand{\unit}{\mathcal{U}}
\newcommand{\vp}{\varepsilon}
\begin{document}

\setlength{\parskip}{1ex plus 0.5ex minus 0.2ex}
\begin{center}
$\mathbf{ON\,\,\,SOFIC\,\,\,ACTIONS\,\,\,AND\,\,\,EQUIVALENCE\,\,\,RELATIONS}$
\end{center}

\begin{center}
L. P\u AUNESCU\footnote{Work supported by the Marie Curie Research
Training Network MRTN-CT-2006-031962 EU-NCG.}
\end{center}

\begin{center}
\end{center}

$\mathbf{Abstract.}$ The notion of sofic equivalence relation was
introduced by Gabor Elek and Gabor Lippner. Their technics employ
some graph theory. Here we define this notion in a more operator
algebraic context, starting from Connes' embedding problem, and
prove the equivalence of this two definitions. We introduce a
notion of sofic action for an arbitrary group and prove that
amalgamated product of sofic actions over amenable groups is again
sofic. We also prove that amalgamated product of sofic groups over
an amenable subgroup is again sofic.

\tableofcontents
\section{Introduction}

\subsection{Definitions of hyperlinear and sofic actions}

We shall work with ultraproducts of matrix algebras instead of
$R^\omega$. We denote such a ultraproduct by
$\Pi_{k\to\omega}M_{n_k}(\cz)$. It is well known that embedding in
$R^\omega$ is equivalent with embedding in
$\Pi_{k\to\omega}M_{n_k}(\cz)$. For the sake of completeness we
include a proof. We shall always denote by $Tr$ the normalized
trace on a type $I$ or type $II$ finite factor.

\begin{p}
$R^\omega$ embeds in some $\Pi_{k\to\omega}M_{n_k}(\cz)$. Thus any
type II factor embedable in $R^\omega$ also embeds in a
ultraproduct of matrices.
\end{p}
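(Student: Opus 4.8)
The plan is to exploit the hyperfiniteness of $R$ together with the fact that an iterated ultraproduct of matrix algebras collapses to a single one. First I would embed $R$ itself: writing $R$ as the weak closure of the increasing union $\bigcup_m M_{2^m}(\cz)$ under the unital trace-preserving inclusions $x\mapsto x\otimes I_2$, send $x\in M_{2^m}(\cz)$ to the class of the sequence whose $n$-th entry (for $n\ge m$) is the image of $x$ in $M_{2^n}(\cz)$. Since every inclusion preserves the normalized trace $Tr$, the resulting map $\bigcup_m M_{2^m}(\cz)\to \Pi_{n\to\omega}M_{2^n}(\cz)=:\mathcal M$ is a trace-preserving $*$-homomorphism, hence $\|\cdot\|_2$-isometric on a weakly dense $*$-subalgebra; it therefore extends to a normal trace-preserving $*$-homomorphism $\iota\colon R\hookrightarrow \mathcal M$, which is injective because a trace-preserving normal morphism out of a factor has trivial kernel.

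Applying $\iota$ coordinatewise yields a trace-preserving, hence injective, embedding $R^\omega=\Pi_{j\to\omega}R\hookrightarrow \Pi_{j\to\omega}\mathcal M=\Pi_{j\to\omega}\Pi_{n\to\omega}M_{2^n}(\cz)$, and the crux is to replace this double ultraproduct by a single matricial one. For this I would use the Fubini product $\mathcal U=\omega\otimes\omega$ of ultrafilters on $\nz\times\nz$, determined by $S\in\mathcal U\iff\{j:\{n:(j,n)\in S\}\in\omega\}\in\omega$; it is free and satisfies $\lim_{(j,n)\to\mathcal U}f(j,n)=\lim_{j\to\omega}\lim_{n\to\omega}f(j,n)$ for every bounded $f$. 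Choosing, for an element of the double ultraproduct, double representatives $(a^{(j)}_n)_{(j,n)}$ with a uniform norm bound and sending it to its class in $\Pi_{(j,n)\to\mathcal U}M_{2^n}(\cz)$ defines, after the reindexing $\nz\times\nz\cong\nz$, a map into a single matricial ultraproduct. Applying the Fubini identity to $Tr(a^{(j)}_n)$ and to $\|a^{(j)}_n-\tilde a^{(j)}_n\|_2$ shows at once that this map is well defined, trace preserving, multiplicative and $*$-preserving, hence a $\|\cdot\|_2$-isometric embedding. Composing the three maps proves the first assertion, and the second is then immediate: if a type II factor embeds in $R^\omega$, composing with this embedding places it inside a matricial ultraproduct.

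I expect the collapsing step to be the main obstacle: one must verify the Fubini property of $\omega\otimes\omega$ and control the choice of inner representatives, in particular arranging a uniform operator-norm bound so that the double sequence is admissible, since trace-preservation and multiplicativity pass to the limit only once the representatives are controlled. A minor point worth flagging is that the target ultrafilter is $\omega\otimes\omega$ rather than $\omega$ itself; this is harmless, as the statement asks only for \emph{some} ultraproduct of matrices.
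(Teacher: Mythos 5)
Your proposal is correct and follows essentially the same route as the paper: embed $R$ into an ultraproduct of matrix algebras via hyperfiniteness, pass to $R^\omega\subset (M_{n_k}^\omega)^\omega$, and collapse the iterated ultraproduct to a single one over the product ultrafilter $\omega\otimes\omega$. The paper states this in two lines and leaves the Fubini-product details (which you rightly flag as the only delicate point) to the reference [Ca-Pa].
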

\begin{proof}
Approximating the hyperfinite factor by matrix algebras we can
easily see that $R\subset M_{n_k}^\omega$. Then $R^\omega\subset
(M_{n_k}^\omega)^\omega\simeq M_{n_k}^{\omega\otimes\omega}$.
\end{proof}

Some results in this article will have to do with soficity of
certain objects. Because of this, we shall work with permutations.

\begin{nt}
We shall denote by $D_{n_k}\subset M_{n_k}$ the diagonal
subalgebra and by $P_{n_k}\subset M_{n_k}$ the subgroup of
permutation matrices. Given an ultraproduct
$\Pi_{k\to\omega}M_{n_k}(\cz)$ denote by
$\Pi_{k\to\omega}D_{n_k}(\cz)$ and $\Pi_{k\to\omega}P_{n_k}(\cz)$
the corresponding subsets.
\end{nt}

By a theorem of Popa (see \cite{Po1}, proposition 4.3)
$\Pi_{k\to\omega}D_{n_k}(\cz)$ is a maximal abelian nonseparable
subalgebra of $\Pi_{k\to\omega}M_{n_k}(\cz)$. We now introduce a
notion of hyperlinearity and soficity for actions.

\begin{de}
An action $\alpha$ of a countable group $G$ on a standard Borelian
space $(X,\bb,\mu)$ is called \emph{hyperlinear} if the crossed
product $L^\infty(X)\rtimes_\alpha G$ embeds in $R^\omega$.
\end{de}

\begin{de}\label{sofic action}
An action $\alpha$ of a countable group $G$ on a standard Borelian
space $(X,\bb,\mu)$ is called \emph{sofic} if the crossed product
$L^\infty(X)\rtimes_\alpha G$ embeds in
$\Pi_{k\to\omega}M_{n_k}(\cz)$ such that
$L^\infty(X)\subset\Pi_{k\to\omega}D_{n_k}(\cz)$ and
$G\subset\Pi_{k\to\omega}P_{n_k}(\cz)$.
\end{de}

So for an action to be sofic instead of simply hyperlinear we want
that the unitaries implementing the action, to be permutation
matrices. Note that also Elek and Lippner defined a notion of
soficity for action of $\ff_\infty$ as a preliminary step in
defining the concept of sofic equivalence relation. The two
notions of sofic actions are not equivalent.

Next theorem shows that the property of sofic action is invariant
under orbit equivalence. While the proof is quite simple, this
theorem hints very clearly that being sofic is a property of the
orbit equivalence relation, rather than of the action itself.

\begin{te}\label{orbit equivalent actions}
Let $\alpha$ and $\beta$ be two free orbit equivalent actions. If
$\alpha$ is hyperlinear (sofic) then also $\beta$ is hyperlinear
(sofic).
\end{te}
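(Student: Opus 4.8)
The plan is to exploit the fact that, for a free probability measure preserving action, the crossed product together with the distinguished copy of $L^\infty$ inside it remembers only the orbit equivalence relation. Write $\alpha$ for an action of $G$ on $(X,\bb,\mu)$ with canonical unitaries $u_g$, and $\beta$ for an action of $H$ on $(Y,\nu)$ with canonical unitaries $v_h$, and let $\theta\colon(X,\mu)\to(Y,\nu)$ realise the orbit equivalence. By the theorem of Feldman and Moore together with Singer's characterisation of Cartan subalgebras, freeness gives isomorphisms $L^\infty(X)\rtimes_\alpha G\simeq L(R_\alpha)$ and $L^\infty(Y)\rtimes_\beta H\simeq L(R_\beta)$ onto the von Neumann algebras of the orbit equivalence relations, carrying $L^\infty(X)$ and $L^\infty(Y)$ onto the respective Cartan subalgebras, while $\theta$ induces an isomorphism of measured equivalence relations $R_\alpha\simeq R_\beta$. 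Composing, I obtain a trace preserving isomorphism
\[
\Phi\colon L^\infty(Y)\rtimes_\beta H\;\xrightarrow{\ \sim\ }\;L^\infty(X)\rtimes_\alpha G
\]
restricting to an isomorphism $L^\infty(Y)\simeq L^\infty(X)$. For the hyperlinear assertion this already suffices: if $L^\infty(X)\rtimes_\alpha G$ embeds in $R^\omega$, then precomposing with $\Phi$ embeds $L^\infty(Y)\rtimes_\beta H$ in $R^\omega$, so $\beta$ is hyperlinear.

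The sofic assertion requires in addition that each $\Phi(v_h)$ be sent to a permutation matrix. Here it is important that $\Phi$ is the canonical isomorphism induced by the groupoid isomorphism $\theta$, so that it carries the coefficient-one groupoid unitary $v_h$ to the coefficient-one groupoid unitary attached to the full group element $\theta^{-1}\beta_h\theta\in[R_\alpha]$; in particular $\Phi(v_h)$ carries no diagonal phase and implements a transformation moving every point within its own $\alpha$-orbit. Everything thus reduces to the claim: under any sofic embedding $\pi\colon L^\infty(X)\rtimes_\alpha G\hookrightarrow\Pi_{k\to\omega}M_{n_k}(\cz)$ as in Definition \ref{sofic action}, the canonical groupoid unitary of any element of the full group $[R_\alpha]$ lands in $\Pi_{k\to\omega}P_{n_k}(\cz)$.

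To establish the claim I would use freeness to write such a full group unitary as a measurable gluing $w=\sum_{g\in G}u_g p_g$, where the $p_g$ are mutually orthogonal projections of $L^\infty(X)$ with $\sum_g p_g=1$ (the domains tile $X$) and $\sum_g\alpha_g(p_g)=1$ (the images tile $X$); these two identities are precisely $w^\ast w=ww^\ast=1$. Applying $\pi$ sends each $p_g$ to a diagonal projection $e_g$ and each $u_g$ to a permutation $\pi_g\in\Pi_{k\to\omega}P_{n_k}(\cz)$, so $\pi(w)=\sum_g\pi_g e_g$. In a single matrix algebra a sum $\sum_g\pi_g e_g$ whose supports $e_g$ tile the standard basis and whose images $\pi_g e_g\pi_g^{-1}$ tile the standard basis is literally a permutation matrix; since $\pi$ is a trace preserving (hence $2$-norm isometric, hence normal) embedding, the two tiling identities pass through $\pi$, so $\pi(w)\in\Pi_{k\to\omega}P_{n_k}(\cz)$. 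Taking $w=\Phi(v_h)$ then makes $\pi\circ\Phi$ a sofic embedding of $L^\infty(Y)\rtimes_\beta H$, proving $\beta$ sofic.

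The step I expect to be the real obstacle is making this gluing legitimate when $G$ is infinite, as it typically is for $w=\Phi(v_h)$: then $w=\sum_g u_g p_g$ and $\pi(w)=\sum_g\pi_g e_g$ are infinite sums that cannot be manipulated termwise in the ultraproduct. I would handle this by truncating to a finite $F\subset G$ on which the $p_g$ carry almost all of the trace, observing that the finite partial sum is a genuine partial permutation lying $2$-norm close to $\pi(w)$, and then invoking that $\Pi_{k\to\omega}P_{n_k}(\cz)$ is $2$-norm closed (a routine diagonal argument over the representing sequences, where the maximal abelianness of $\Pi_{k\to\omega}D_{n_k}(\cz)$ recalled above controls the normaliser) to conclude that the limit $\pi(w)$ is itself a permutation. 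Checking that both tiling conditions survive the truncation and the passage to the ultralimit is the only genuinely delicate computation.
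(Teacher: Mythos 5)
Your reduction is the same as the paper's: orbit equivalence plus freeness gives an isomorphism of the two crossed products restricting to the identity on the common copy of $L^\infty$, which settles the hyperlinear case at once, and for soficity everything comes down to showing that the image of a full-group unitary $w=\sum_g u_g p_g$ under a sofic embedding is again a permutation. The paper isolates exactly this as Lemma \ref{permutations}: \emph{a unitary that is a sum of pieces of permutations is a permutation}.

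The gap is in how you propose to prove that statement. Two problems. First, the $2$-norm closedness of $\Pi_{k\to\omega}P_{n_k}$ only tells you that a limit of \emph{permutations} is a permutation; your truncations $w_F=\sum_{g\in F}\pi_g e_g$ are not permutations (not even unitaries), so closedness cannot be invoked on them as stated. Second, the assertion that $w_F$ is a ``genuine partial permutation'' fails at the level of representing sequences: choosing the $e_g^k$ exactly orthogonal at each $k$ makes the columns of $w_F^k$ fine (at most one entry $1$ per column), but the range projections $\pi_g^k e_g^k\pi_g^{k*}$ are only asymptotically orthogonal, so a row of $w_F^k$ may contain several $1$'s, and $w_F^k$ is not a partial permutation matrix. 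What is actually needed --- and what the paper's lemma supplies --- is a correction argument: count the deficient columns (or overfull rows), observe that their density $r_k/n_k$ is controlled by $\|w^{k*}w^k-1\|_2^2$ and hence tends to $0$ along $\omega$, and move the offending entries to produce a genuine permutation matrix within $2$-norm $\sqrt{2r_k/n_k}$ of $w^k$. Without that counting-and-moving step the passage from ``unitary sum of pieces of permutations'' to ``permutation'' is unproved, and that step is the entire content of the theorem beyond the formal reduction. Note also that the paper sidesteps your truncation worry altogether: since $\sum_g e_g=1$ can be realised by an exact partition $\sum_g e_g^k=1_{n_k}$ at each level, only finitely many $e_g^k$ are nonzero for each fixed $k$, so the infinite sum is finite levelwise and no limiting argument over $F$ is required.
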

\begin{proof}
Let $\alpha$ be an action of $G$ and $\beta$ be an action of $H$
such that $L^\infty(X)\rtimes_\alpha G\simeq
L^\infty(X)\rtimes_\beta H$ (isomorphism that is identity on
$L^\infty(X)$). By this we are done with the hyperlinear part of
the theorem. Consider now a sofic embedding of
$L^\infty(X)\rtimes_\alpha G$ in $\Pi_{k\to\omega}M_{n_k}$. Let
$u_g\in L(G)$, $g\in G$ and $v_s\in L(H)$, $s\in H$ denoting the
corresponding unitaries. For $s\in H$ we can find projections
$\{p_g|g\in G\}$ in $L^\infty(X)$ such that $\sum_gp_g=1$ and
$v_s=\sum_gp_gu_g$. By next lemma, $v_s\in
\Pi_{k\to\omega}P_{n_k}$ and we are done.
\end{proof}

\begin{lemma}\label{permutations}
Let $\{e_i|i\in\nz\}$ be projections in $\Pi_{k\to\omega}D_{n_k}$
such that $\sum_ie_i=1$. Let $\{u_i|i\in\nz\}$ be unitary elements
in $\Pi_{k\to\omega}P_{n_k}$ such that $\sum_iu_i^*e_iu_i=1$. Then
$v=\sum_ie_iu_i$ is a unitary in $\Pi_{k\to\omega}P_{n_k}$.
\end{lemma}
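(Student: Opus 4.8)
The plan is to prove two separate things about $v=\sum_ie_iu_i$: that it is a unitary, and that it has a representing sequence of genuine permutation matrices, i.e. $v\in\Pi_{k\to\omega}P_{n_k}$. Throughout I would work with the finite truncations $v_N=\sum_{i\le N}e_iu_i$ and pass to $\norm{\cdot}_2$-limits, so a preliminary (soft) point is that the infinite sums converge in $\norm{\cdot}_2$; this is what legitimises the manipulations below.

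For unitarity, note first that since the $e_i$ are projections in the abelian algebra $\Pi_{k\to\omega}D_{n_k}$ with $\sum_ie_i=1$, they are mutually orthogonal, $e_ie_j=\delta_{ij}e_i$. Hence $v^*v=\sum_{i,j}u_i^*e_ie_ju_j=\sum_iu_i^*e_iu_i=1$ by hypothesis. For $vv^*=1$ I would exploit a symmetry: writing $f_i=u_i^*e_iu_i$ one has $v^*=\sum_iu_i^*e_i=\sum_if_iu_i^*$, and the $f_i$ are again orthogonal projections with $\sum_if_i=1$, while $\sum_iu_if_iu_i^*=\sum_ie_i=1$. Thus $v^*$ is an element of exactly the same shape as the lemma's $v$ (projections $f_i$, unitaries $u_i^*$), so the computation just performed, applied to $v^*$, yields $(v^*)^*v^*=vv^*=\sum_iu_if_iu_i^*=1$.

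For membership in $\Pi_{k\to\omega}P_{n_k}$, I would introduce the levelwise distance to permutations: for $a=(a^k)_k$ set $\rho(a)=\lim_{k\to\omega}\min_{p\in P_{n_k}}\norm{a^k-p}_2$. Choosing at each level a minimiser $p^k$ shows that $\rho(a)=0$ forces $a\in\Pi_{k\to\omega}P_{n_k}$, since the $p^k$ then represent $a$; and $\rho$ satisfies the triangle inequality because distance to a set is $1$-Lipschitz. So it suffices to prove $\rho(v)=0$. Fix $\vp>0$ and choose $N$ with $\sum_{i>N}Tr(e_i)<\vp$; then $\norm{v-v_N}_2^2=\sum_{i>N}Tr(e_i)<\vp$ and $\norm{v_N}_2^2=\sum_{i\le N}Tr(e_i)>1-\vp$. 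Since $\rho(v)\le\rho(v_N)+\norm{v-v_N}_2$, it is enough to show $\rho(v_N)\le\sqrt\vp$, as this gives $\rho(v)\le2\sqrt\vp$ for every $\vp$, hence $\rho(v)=0$.

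To estimate $\rho(v_N)$ I would analyse the finite matrix $v_N^k=\sum_{i\le N}e_i^ku_i^k$ entrywise, where $e_i^k$ is the diagonal projection onto $S_i^k\subset\{1,\dots,n_k\}$ and $u_i^k$ is a permutation matrix. Orthogonality of the $e_i$ and of the $f_i$ means that, along $\omega$, the row-supports $S_i^k$ are almost disjoint and the column-supports $(u_i^k)^{-1}(S_i^k)$ are almost disjoint. Deleting from $v_N^k$ the $\omega$-negligible set of rows lying in two of the $S_i^k$ and of columns lying in two of the column-supports leaves a $0$–$1$ matrix with at most one entry per row and per column, a subpermutation $\tilde v^k$ with $\lim_{k\to\omega}\norm{v_N^k-\tilde v^k}_2=0$. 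A subpermutation has equally many empty rows and empty columns, so it extends to a genuine $p^k\in P_{n_k}$ by pairing those up arbitrarily, and $\norm{\tilde v^k-p^k}_2^2=1-\norm{\tilde v^k}_2^2\to_\omega1-\norm{v_N}_2^2<\vp$. Hence $\rho(v_N)\le\sqrt\vp$, and the proof closes. The main obstacle is precisely this last, levelwise step: converting the two approximate-orthogonality statements into an honest subpermutation while keeping the discarded mass $\omega$-negligible, and verifying that completing a subpermutation costs exactly the corank $1-\norm{v_N}_2^2$ in $\norm{\cdot}_2$. Everything else — unitarity, the truncation estimate, and the reduction to $\rho(v)=0$ — is comparatively soft.
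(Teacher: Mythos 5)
Your proof is correct and follows essentially the same route as the paper: pass to levelwise representatives, observe that $\sum_i e_i^k u_i^k$ is a $0$--$1$ matrix which is nearly a (sub)permutation, and complete it to a genuine permutation at an $\|\cdot\|_2$-cost controlled by the number of defective columns. The paper shortcuts your truncation and row-cleanup steps by choosing representatives $e_i^k\in D_{n_k}$ with $\sum_i e_i^k=1_{n_k}$ exactly at each level $k$, so that $v^k$ automatically has exactly one entry $1$ in each row and only the $r_k$ empty columns need repairing, at cost $\|v^k-w^k\|_2^2=2r_k/n_k\to0$.
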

\begin{proof}
Using $\sum_ie_i=1$ we can easily construct projections $e_i^k\in
D_{n_k}$ such that:
\begin{enumerate}
\item $e_i=\Pi_{k\to\omega}e_i^k$; \item $\sum_ie_i^k=1_{n_k}$.
\end{enumerate}
By hypothesis we have $u_i=\Pi_{k\to\omega}u_i^k$ where $u_i^k\in
P_{n_k}$. If $v^k=\sum_ie_i^ku_i^k$ then $v=\Pi_{k\to\omega}v^k$,
but $v^k$ are not necessary permutation matrices. Because
$\sum_iu_i^*e_iu_i=1$ we have $v^*v=1$, so $v$ is a unitary. Now,
$v^k$ has only $0$ and $1$ entries and exactly one entry of $1$ on
each line. Then $v^{k*}v^k$ is a diagonal matrix giving the number
of $1$ entries on each column.

Denote by $r_k$ the number of columns in $v^k$ having only $0$
entries. This number will also represent the numbers of $0$
entries on the diagonal of $v^{k*}v^k$. Then:
\[||v^{k*}v^k-Id||_2^2\geq\frac{r_k}{n_k}.\]
Because $\Pi_{k\to\omega}v^{k*}v^k=1$ we have
$r_k/n_k\to_{k\to\omega}0$. We can "move" $r_k$ entries of $1$ in
$v^k$ on the $r_k$ missing columns to get a permutation matrix
$w^k\in P_{n_k}$. Then:
\[||v^k-w^k||_2^2=\frac{2r_k}{n_k}.\]
Combined with $r_k/n_k\to_{k\to\omega}0$ we get
$v=\Pi_{k\to\omega}w^k$. This will prove the lemma.
\end{proof}

\begin{de}
We shall call an element of $\Pi_{k\to\omega}P_{n_k}$ a
\emph{permutation}, instead of an ultraproduct of permutations. A
\emph{piece of permutation} will be an element of the form $e\cdot
p$, where $e$ is a projection in $\Pi_{k\to\omega}D_{n_k}$ and $p$
is a permutation.
\end{de}

Lemma \ref{permutations} now becomes \emph{a unitary that is a sum
of pieces of permutations is a permutation}.

\subsection{The Feldman-Moore construction}

Based on theorem \ref{orbit equivalent actions}, a definition of
sofic equivalence relation is possible for relations generated by
free actions. Unfortunately, not all equivalence relations have
this property. We adapt the definition using the Feldman-Moore
construction. Let us recall some things from \cite{Fe-Mo}.

Let $(X,\bb,\mu)$ be a standard space with a probability measure.
Let $E\subset X^2$ an equivalence relation on $X$ such that
$E\in\bb\times\bb$. We shall work only with equivalence relations
that are countable, i.e. every equivalence class is countable, and
$\mu$-invariant. Before we recall what this means we introduce
some notation.

Denote by $[E]$ set of all isomorphism with graph in $E$ and by
$[[E]]$ set of all partial isomorphism with graph in $E$:
\begin{align*}
[E]=&\{\theta:X\to X:\theta\mbox{ bijection },graph\theta\subset E\};\\
[[E]]=&\{\phi:A\to B:A,B\subset X,\ \phi\mbox{ bijection
},graph\phi\subset E\}.
\end{align*}

\begin{de}
Let $E$ an equivalence relation on $(X,\mu)$. Then $E$ is called
$\mu$-invariant if for any $\phi:A\to B$, $\phi\in[[E]]$ we have
$\mu(A)=\mu(B)$.
\end{de}

For a von Neumann algebra $A$, we shall denote by $\unit(A)$ the
group of unitaries in the algebra $A$. The normalizer $\nr_M(A)$
and the normalizing pseudogroup $\gr\nr_M(A)$ are the
corresponding objects of $[E]$ and $[[E]]$ respectively. For
$A\subset M$ define:

\begin{align*}
&\nr_M(A)=\{u\in\unit(M):uAu^*=A\};\\
&\gr\nr_M(A)=\{v\in M\mbox{ partial isometry}:vv^*,v^*v\in A,
vAv^* =Avv^*\}.
\end{align*}

Now we can construct the algebra $M(E)$ associated to an
equivalence relation.

\begin{de}
A measurable function $a:E\to\cz$ is called \emph{finite} if $a$
is bounded and there is a natural number $n$ such that:
\begin{align*}
Card(\{x:a(x,y)\neq0\})\leq n\ \forall y\in X;\\
Card(\{y:a(x,y)\neq0\})\leq n\ \forall x\in X.
\end{align*}
\end{de}

A finite function (matrix) is a bounded function with finite
number of nonzero entries on each line and column (having also a
global margin). We shall multiply this functions as general
matrices and the definition of finite function guarantees we get a
$*$-algebra. Define:
\begin{align*}
&M_0(E)=\{a:E\to\cz:a\mbox{ finite function}\};\\
&a\cdot b(x,z)=\sum_ya(x,y)b(y,z);\\
&a^*(x,y)=\overline{a(y,x)}.
\end{align*}

It is easy to check this is indeed a $*$-algebra. The trace is
defined in a similar way as in the case of matrices:
\[Tr(a)=\int_Xa(x,x)d\mu.\]

The algebra $M(E)$ will be the weak closure of $M_0(E)$ in the GNS
representation of $(M_0(E),Tr)$. By general theory of von Neumann
algebras, using the cyclic separating vector of the GNS
representation, we can still see elements of $M(E)$ as measurable
functions $a:E\to\cz$.

Let $\Delta=\{(x,x):x\in X\}$ the diagonal in $E$. Define the
subalgebra of diagonal matrices:
\[A=\{a\in M(E):supp(a)\subset\Delta\}.\]

We shall denote by $\delta_x^y$ the Kronecker delta function, i.e.
$\delta_x^y=1$ iff $x=y$; otherwise $\delta_x^y=0$. Notation
$\chi_A$ stands for the characteristic function of $A$.

\begin{de}
For $\theta\in[E]$ define $u_\theta\in M(E)$ by:
$u_\theta(x,y)=\delta_x^{\theta(y)}$. For $\phi\in[[E]]$, define
$v_\phi(x,y)=\chi_{dom(\phi)}(y)\cdot\delta_x^{\phi(y)}$.
\end{de}

It is not hard to see that $u_\theta\in\nr(A)$ for any
$\theta\in[E]$ and $v_\phi\in\gr\nr(A)$ for any $\phi\in[[E]]$.
Moreover any $u\in\nr(A)$ is of the form $a\cdot u_\theta$, where
$a\in\unit(A)$ and $\theta\in[E]$. Also $u_\theta
u_\psi=u_{\theta\circ\psi}$ for $\theta,\psi\in[E]$. This provides
a group isomorphism between the Weyl group $\nr(A)/\unit(A)$ and
$[E]$.

Algebra $A$ is maximal abelian in $M(E)$. Also $\nr(A)''=M(E)$.
This properties make $A$ a Cartan subalgebra of $M(E)$. We shall
call $A\subset M(E)$ a Cartan pair.

Motivation of Feldman-Moore construction was the invariance of
crossed product up to orbit equivalent actions. Next example shows
this is indeed the right construction.

\begin{ex}
Let $\alpha:G\to Aut(X,\mu)$ a free action. Denote by $E_\alpha$
the orbit equivalence relation generated by $\alpha$ on $X$. Then:
\[L^\infty(X)\rtimes_\alpha G\simeq M(E_\alpha).\]
\end{ex}

\subsection{Definition of sofic equivalence relations}

The notion of sofic equivalence relation was introduced by Gabor
Elek and Gabor Lippner (see \cite{El-Li}). We shall provide a
different definition here and prove in section \ref{Ser} the
equivalence of the two definitions.

\begin{de}
An equivalence relation $E$ is called \emph{sofic} if there is an
embedding of $M(E)$ in some $\Pi_{k\to\omega}M_{n_k}$ such that
$A\subset\Pi_{k\to\omega}D_{n_k}$ and $\nr(A)\subset
\unit(A)\cdot\Pi_{k\to\omega}P_{n_k}$.
\end{de}

This has the advantage of being a compact definition, but in
practice we shall need the following type of embeddings.

\begin{de}
Let $E$ an equivalence relation and $A\subset M(E)$ the Cartan
pair associated to $E$. We call an embedding
$\Theta:M(E)\to\Pi_{k\to\omega}M_{n_k}$ \emph{sofic} if
$\Theta(A)\subset\Pi_{k\to\omega}D_{n_k}$ and
$\Theta(u_\theta)\subset\Pi_{k\to\omega}P_{n_k}$ for any
$\theta\in[E]$.
\end{de}

\begin{p}\label{soficembedding}
An equivalence relation $E$ is a sofic if and only if its Cartan
pair $A\subset M(E)$ admits a sofic embedding.
\end{p}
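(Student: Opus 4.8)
The plan is to prove the two implications separately; the forward one is routine and the converse carries all the content.

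The direction ``sofic embedding $\Rightarrow$ sofic'' is immediate. Given a sofic embedding $\Theta$ we have $\Theta(A)\subset\Pi_{k\to\omega}D_{n_k}$. For $u\in\nr(A)$ use the structure recalled above to write $u=a\,u_\theta$ with $a\in\unit(A)$ and $\theta\in[E]$; then $\Theta(u)=\Theta(a)\Theta(u_\theta)\in\unit(A)\cdot\Pi_{k\to\omega}P_{n_k}$, so $\Theta$ already witnesses that $E$ is sofic. For the converse, start from an embedding $\Theta$ as in the definition of a sofic equivalence relation, identify $M(E)$ with its image, and write for each $\theta$ the decomposition $\Theta(u_\theta)=a_\theta p_\theta$ with $a_\theta=\Theta(b_\theta)\in\unit(A)\subset\Pi_{k\to\omega}D_{n_k}$ and $p_\theta\in\Pi_{k\to\omega}P_{n_k}$, which exists by hypothesis. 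I first record that this monomial decomposition is unique: if $dp=d'p'$ with $d,d'$ diagonal unitaries and $p,p'$ permutations, then $d'^{*}d=p'p^{*}$ is simultaneously diagonal and a permutation, and an argument in the spirit of Lemma \ref{permutations} (a permutation lying in $\Pi_{k\to\omega}D_{n_k}$ is the identity) forces $p=p'$ and $d=d'$.

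With uniqueness in hand, I show $\theta\mapsto p_\theta$ is a group homomorphism and defines the required embedding. From $u_\theta u_\psi=u_{\theta\psi}$ we get $a_\theta p_\theta a_\psi p_\psi=a_{\theta\psi}p_{\theta\psi}$; since $p_\theta$ normalizes $\Pi_{k\to\omega}D_{n_k}$, the element $p_\theta a_\psi p_\theta^{*}$ is diagonal, so rewriting the left side as $\bigl(a_\theta\,p_\theta a_\psi p_\theta^{*}\bigr)\,p_\theta p_\psi$ gives a monomial decomposition of $\Theta(u_{\theta\psi})$, and uniqueness yields $p_{\theta\psi}=p_\theta p_\psi$. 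Because $\Theta(a)$ and $a_\theta$ are both diagonal they commute, so $p_\theta\Theta(a)p_\theta^{*}=a_\theta^{*}\Theta(u_\theta a u_\theta^{*})a_\theta=\Theta(u_\theta a u_\theta^{*})$ for $a\in A$. Hence setting $\Theta'|_A=\Theta|_A$ and $\Theta'(u_\theta)=p_\theta$ respects all relations of the Cartan pair and extends to a unital $*$-homomorphism on the dense $*$-algebra spanned by $\{a u_\theta\}$, with $\Theta'(A)\subset\Pi_{k\to\omega}D_{n_k}$ and $\Theta'(u_\theta)\in\Pi_{k\to\omega}P_{n_k}$.

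The main point is that $\Theta'$ preserves the trace, which makes it a faithful embedding and hence a sofic embedding. The key observation is that $u_\theta$ acts as the identity on the fixed-point set: with $F_\theta=\{x:\theta(x)=x\}$ and $P=\chi_{F_\theta}\in A$, the formula $u_\theta(x,y)=\delta_x^{\theta(y)}$ gives $u_\theta P=P u_\theta=P$. Applying $\Theta$ yields $a_\theta\Theta(P)p_\theta=\Theta(P)$, so $\Theta(P)p_\theta^{*}=a_\theta\Theta(P)$ is diagonal; a permutation $p_\theta^{*}$ with $\Theta(P)p_\theta^{*}$ diagonal must fix the support of $\Theta(P)$, whence $\Theta(P)p_\theta^{*}=\Theta(P)$ and therefore $a_\theta\Theta(P)=\Theta(P)$, i.e. $b_\theta=1$ on $F_\theta$. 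Now $Tr(a u_\theta)=\int_{F_\theta}a\,d\mu$ depends only on the values of $a$ on $F_\theta$, and for a diagonal $\Theta(a)$ and a permutation $p_\theta$ the trace $Tr(\Theta(a)p_\theta)$ only sees the fixed points of $p_\theta$; using $Tr(p_\theta)=Tr(\Theta(\chi_{F_\theta}))$ to identify the fixed-point set of $p_\theta$ with the support of $\Theta(\chi_{F_\theta})$ up to an $\omega$-null set, one gets $Tr(\Theta'(a u_\theta))=Tr(\Theta(a)p_\theta)=Tr(a u_\theta)$. Thus $Tr\circ\Theta'=Tr$ on a weakly dense subalgebra; both being normal traces they agree on $M(E)$, so $\Theta'$ is a trace-preserving, hence injective, sofic embedding.

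I expect this trace computation to be the main obstacle: everything hinges on showing that the diagonal correction $a_\theta$ is trivial exactly on $F_\theta$, so that discarding it leaves the trace unchanged. This is the only place where the specific geometry of $M(E)$ (that $u_\theta$ fixes $L^2(F_\theta)$ pointwise) and the permutation—rather than merely unitary—nature of $p_\theta$ are genuinely used; the homomorphism property and the extension to a $*$-homomorphism are comparatively formal once uniqueness of the monomial decomposition is established.
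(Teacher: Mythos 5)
Your proof is correct and follows essentially the same route as the paper: the unique decomposition $\Theta(u_\theta)=a_\theta p_\theta$, the homomorphism property of $\theta\mapsto p_\theta$ obtained from that uniqueness, and the key fact that the diagonal correction $a_\theta$ is trivial on the fixed-point set of $\theta$ (which, as you note, is what makes both the well-definedness and the trace computation work) are exactly the ingredients of the paper's argument, where the last point is derived by a conditional-expectation/positivity argument rather than your direct permutation counting. The only packaging difference is that the paper realizes your $\Theta'$ as $\Theta\circ\alpha$ for an automorphism $\alpha$ of $M(E)$ with $\alpha|_A=\mathrm{id}$ and $\alpha(u_\varphi)=f_\varphi^*u_\varphi$, so that trace preservation comes for free, whereas you verify it by hand on monomials.
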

\begin{proof}
Let $\Theta:M(E)\to\Pi_{k\to\omega}M_{n_k}$ an embedding such that
$\Theta(A)\subset\Pi_{k\to\omega}D_{n_k}$ and
$\Theta(\nr(A))\subset\Theta(\unit(A))\cdot\Pi_{k\to\omega}P_{n_k}$.

For $\varphi\in[E]$ we have a unique decomposition
$\Theta(u_\varphi)=\Theta(f_\varphi)v_\varphi$, where
$f_\varphi\in\unit(A)$ and $v_\varphi\in\Pi_{k\to\omega}P_{n_k}$.
Then:
\[\Theta(f_\psi\circ\varphi^{-1})=\Theta(u_\varphi)\Theta(f_\psi)\Theta(u_\varphi^*)=
\Theta(f_\varphi)(v_\varphi\Theta(f_\psi)v_\varphi^*)\Theta(f_\varphi^*)=v_\varphi\Theta(f_\psi)v_\varphi^*.\]
Because of the uniqueness of the decomposition of
$\Theta(u_\varphi)$ we have
$f_{\varphi\psi}=f_\varphi(f_\psi\circ\varphi^{-1})$. If
$\chi_\varphi$ denotes the projection with support $\{x\in
X:\varphi(x)=x\}$, one has $\chi_\varphi u_\varphi=\chi_\varphi$
and hence:
\[\Theta(f_\varphi^*\chi_\varphi)=\Theta(f_\varphi^*\chi_\varphi
u_\varphi)=\Theta(\chi_\varphi)v_\varphi.\]

The conditional expectation of $v_\varphi$ on
$\Pi_{k\to\omega}D_{n_k}$ is a projection. Thus, taking the
conditional expectation on $\Theta(A)$ it follows that
$f_\varphi^*\chi_\varphi$ is positive and hence equal to
$\chi_\varphi$. So, for all $\varphi\in[E]$ we have
$f_\varphi\chi_\varphi=\chi_\varphi$. Altogether, it follows that
the formula:
\begin{align*}
\alpha(u_\varphi)&=f_\varphi^*u_\varphi\mbox{ for all
}\varphi\in[E],\\
\alpha(a)&=a\mbox{ for all }a\in A
\end{align*}
provides a well defined automorphism of $M(E)$. The composition of
$\Theta$ and $\alpha$ is the required sofic embedding of $M(E)$.
\end{proof}

As a consequence of this proposition and lemma \ref{permutations},
we have the following result.

\begin{p}\label{equivalent definitions}
Let $\alpha$ be a free action. Then $E_\alpha$ is a sofic
equivalence relation if and only if $\alpha$ is a sofic action.
\end{p}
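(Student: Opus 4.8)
The plan is to show the equivalence via the connection between the crossed product $L^\infty(X)\rtimes_\alpha G$ and the Feldman--Moore algebra $M(E_\alpha)$. The key structural fact, supplied by the example preceding this proposition, is that for a free action $\alpha$ there is an isomorphism $L^\infty(X)\rtimes_\alpha G\simeq M(E_\alpha)$ which is the identity on $L^\infty(X)=A$. Under this isomorphism the diagonal subalgebra $L^\infty(X)$ corresponds to the Cartan subalgebra $A$, and the canonical group unitaries $u_g$ implementing the action correspond precisely to elements $u_{\theta_g}\in\nr(A)$, where $\theta_g\in[E_\alpha]$ is the transformation $x\mapsto g\cdot x$. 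Thus the whole proof reduces to matching up the two soficity requirements through this isomorphism.

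First I would fix the isomorphism $\pi:L^\infty(X)\rtimes_\alpha G\to M(E_\alpha)$ from the example and verify that it sends $u_g$ to $u_{\theta_g}$ and is the identity on the common diagonal. Then for the direction that $\alpha$ sofic implies $E_\alpha$ sofic, I would take a sofic embedding of the crossed product, i.e. an embedding into $\Pi_{k\to\omega}M_{n_k}$ carrying $L^\infty(X)$ into $\Pi_{k\to\omega}D_{n_k}$ and each $u_g$ into $\Pi_{k\to\omega}P_{n_k}$. Composing with $\pi^{-1}$ yields an embedding $\Theta$ of $M(E_\alpha)$ for which $\Theta(A)\subset\Pi_{k\to\omega}D_{n_k}$ and $\Theta(u_{\theta_g})\in\Pi_{k\to\omega}P_{n_k}$ for every generator $\theta_g$. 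To conclude that this is a sofic embedding in the sense of the preceding definition, I must produce permutations for \emph{all} $\theta\in[E_\alpha]$, not merely the generators. This is exactly where Lemma \ref{permutations} enters: an arbitrary $\theta\in[E_\alpha]$ decomposes, on a measurable partition $\{p_g\}$ of $X$ indexed by group elements with $\sum_g p_g=1$, as $u_\theta=\sum_g p_g u_{\theta_g}$, a sum of pieces of permutations; since the $u_{\theta_g}$ are permutations and $u_\theta$ is unitary, Lemma \ref{permutations} forces $\Theta(u_\theta)\in\Pi_{k\to\omega}P_{n_k}$. Hence $\Theta$ is a sofic embedding and $E_\alpha$ is sofic.

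For the converse, suppose $E_\alpha$ is sofic. By Proposition \ref{soficembedding} the Cartan pair $A\subset M(E_\alpha)$ admits a sofic embedding $\Theta$, so $\Theta(A)\subset\Pi_{k\to\omega}D_{n_k}$ and $\Theta(u_\theta)\in\Pi_{k\to\omega}P_{n_k}$ for every $\theta\in[E_\alpha]$. Composing with $\pi$ gives an embedding of $L^\infty(X)\rtimes_\alpha G$ that sends $L^\infty(X)$ into the diagonal and each $u_g=u_{\theta_g}$ into the permutations, which is precisely the definition of $\alpha$ being a sofic action. I would remark that the freeness hypothesis is used to guarantee both the isomorphism $L^\infty(X)\rtimes_\alpha G\simeq M(E_\alpha)$ and the faithful correspondence $g\mapsto\theta_g$ from $G$ into $[E_\alpha]$, so that the group unitaries and the partial-isometry generators of the Cartan pair coincide.

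The main obstacle is the forward direction, where the hypothesis only controls the finitely many (or countably many) generators $u_{\theta_g}$ but the definition of a sofic embedding of the Cartan pair demands control over the uncountable family $\{u_\theta:\theta\in[E_\alpha]\}$. The essential point is that every such $\theta$ is built from the generators by cutting with diagonal projections and summing, so $u_\theta$ is a unitary sum of pieces of permutations; invoking the reformulation of Lemma \ref{permutations}, namely that \emph{a unitary that is a sum of pieces of permutations is a permutation}, closes the gap. Once this is in place, both implications are essentially a transport of structure along the identity-on-$A$ isomorphism, and no further analytic work is required.
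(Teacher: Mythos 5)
Your argument is correct and is exactly the route the paper intends: the statement is presented there without a written proof, as an immediate consequence of Proposition \ref{soficembedding} and Lemma \ref{permutations}, and your write-up simply fills in those details (transport along the identity-on-$L^\infty(X)$ isomorphism with $M(E_\alpha)$, the decomposition $u_\theta=\sum_g p_g u_g$ to pass from group generators to all of $[E_\alpha]$ via Lemma \ref{permutations}, and Proposition \ref{soficembedding} for the converse). No discrepancies to report.
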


\subsection{Preliminaries}

We include here some propositions that will be used in different
situations. First an easy observation.

\begin{ob}\label{differentdimensions}
Let $\Theta=\Pi_{k\to\omega}\Theta_k$ be a sofic embedding of some
von Neumann algebra $M$ in $\Pi_{k\to\omega}M_{n_k}$. Consider
also $\{r_k\}_k$ a sequence of natural numbers. Then
$\Theta\otimes 1= \Pi_{k\to\omega}\Theta_k\otimes 1_{r_k}$ is
again a sofic embedding of $M$ in $\Pi_{k\to\omega}M_{n_k}\otimes
M_{r_k}=\Pi_{k\to\omega}M_{n_kr_k}$.

This trick will be used when we need to embed two algebras in the
same $\Pi_{k\to\omega}M_{n_k}$ (that is the same matrix dimension
at each step).
\end{ob}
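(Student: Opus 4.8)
The plan is to realise $\Theta\otimes 1$ as the composition of $\Theta$ with the level-wise inclusion $\iota=\Pi_{k\to\omega}(\,\cdot\otimes 1_{r_k})$, and then to check that $\iota$ is itself a trace-preserving embedding respecting both the diagonal and the permutation structure; once this is done, all three defining properties of a sofic embedding pass to $\iota\circ\Theta$ for free.

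First I would analyse the single-block map $j_k:M_{n_k}\to M_{n_k r_k}$, $a\mapsto a\otimes 1_{r_k}$, using the Kronecker identification $M_{n_k}\otimes M_{r_k}\simeq M_{n_k r_k}$ with the tensor basis $\{(i,a)\}$ ordered lexicographically. Each $j_k$ is a unital $*$-homomorphism, and --- this is the one point where the normalisation of the trace is essential --- it is trace-preserving: for the normalized traces, $Tr_{n_k r_k}(a\otimes 1_{r_k})=Tr_{n_k}(a)\,Tr_{r_k}(1_{r_k})=Tr_{n_k}(a)$, because $Tr_{r_k}(1_{r_k})=1$. Taking the ultraproduct, $\iota=\Pi_{k\to\omega}j_k$ is therefore a well-defined trace-preserving $*$-homomorphism $\Pi_{k\to\omega}M_{n_k}\to\Pi_{k\to\omega}M_{n_k r_k}$, and $\Theta\otimes 1=\iota\circ\Theta$ is a composition of trace-preserving $*$-homomorphisms. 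Faithfulness of the trace then makes $\Theta\otimes 1$ injective, hence an embedding.

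It remains to track the two structural subalgebras under $\iota$. If $d\in D_{n_k}$ is diagonal then $d\otimes 1_{r_k}$ is again diagonal in $M_{n_k r_k}$ (the Kronecker product of diagonal matrices is diagonal), so $\iota$ maps $\Pi_{k\to\omega}D_{n_k}$ into $\Pi_{k\to\omega}D_{n_k r_k}$; consequently $(\Theta\otimes 1)(A)=\iota(\Theta(A))\subset\Pi_{k\to\omega}D_{n_k r_k}$. If $p\in P_{n_k}$ is a permutation matrix, then $(p\otimes 1_{r_k})_{(i,a),(j,b)}=p_{ij}\delta_a^b$ has, in each row $(i,a)$, a single nonzero entry $1$, at the unique column $(j,a)$ with $p_{ij}=1$, and symmetrically in each column; hence $p\otimes 1_{r_k}\in P_{n_k r_k}$ (equivalently, $1_{r_k}$ is itself a permutation matrix and Kronecker products of permutation matrices are permutation matrices). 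Thus $\iota$ maps $\Pi_{k\to\omega}P_{n_k}$ into $\Pi_{k\to\omega}P_{n_k r_k}$, giving $(\Theta\otimes 1)(u_\theta)=\iota(\Theta(u_\theta))\subset\Pi_{k\to\omega}P_{n_k r_k}$ for every permutation unitary $u_\theta$.

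No step presents a real obstacle; the only subtlety worth flagging is the trace normalisation. With an unnormalised trace the map $a\mapsto a\otimes 1_{r_k}$ would multiply traces by the factor $r_k$ and fail to preserve the trace --- it is precisely the equality $Tr_{r_k}(1_{r_k})=1$ for the normalized trace, fixed at the outset of the paper, that makes $\iota$, and therefore $\Theta\otimes 1$, a genuine embedding.
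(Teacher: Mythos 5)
Your proof is correct, and it coincides with the (implicit) argument of the paper: the statement is given there as an Observation with no proof at all, precisely because the verification is the routine one you spell out. You correctly identify the only point of substance, namely that with the normalized trace the map $a\mapsto a\otimes 1_{r_k}$ is trace-preserving (so the ultraproduct map is a well-defined embedding), while diagonality and permutation structure are preserved since the Kronecker product with $1_{r_k}$ keeps $D_{n_k}$ inside $D_{n_kr_k}$ and $P_{n_k}$ inside $P_{n_kr_k}$.
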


Next sequence of lemmas will show that two sofic embeddings of the
same hyperfinite algebra are conjugate by a permutation.

\begin{lemma}
Let $e,f$ two projections in $\Pi_{k\to\omega}D_{n_k}$ such that
$Tr(e)=Tr(f)$. Then there is a unitary
$u\in\Pi_{k\to\omega}P_{n_k}$ such that $f=ueu^*$.
\end{lemma}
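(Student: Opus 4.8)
The plan is to reduce everything to a coordinatewise combinatorial matching problem and then let the ultraproduct wash out the finite-level defect. First I would choose representatives and lift $e$ and $f$ to sequences of genuine diagonal projections: write $e=\Pi_{k\to\omega}e^k$ and $f=\Pi_{k\to\omega}f^k$ with $e^k,f^k\in D_{n_k}$ actual projections. Since $D_{n_k}$ is abelian this is routine---take any bounded self-adjoint representatives and round each diagonal entry to the nearest element of $\{0,1\}$, the error being controlled in $||\cdot||_2$ along $\omega$ by the defect of idempotency. Each $e^k$ is then the indicator of a subset $S_k\subseteq\{1,\dots,n_k\}$ and each $f^k$ the indicator of a subset $T_k$, with $Tr(e^k)=|S_k|/n_k$ and $Tr(f^k)=|T_k|/n_k$.

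The hypothesis $Tr(e)=Tr(f)$ then reads $\bigl||S_k|-|T_k|\bigr|/n_k\to_{k\to\omega}0$. Conjugating a diagonal $0$--$1$ matrix by a permutation matrix merely transports its support, so $u^ke^k(u^k)^*$ is the indicator of $u^k(S_k)$, and the task at level $k$ is to choose $u^k\in P_{n_k}$ making $u^k(S_k)$ as close as possible to $T_k$. I would pick $u^k$ so that the smaller support lands inside the larger: if $|S_k|\le|T_k|$ extend an injection $S_k\hookrightarrow T_k$ to a permutation of $\{1,\dots,n_k\}$ (there are exactly enough free positions to do so), giving $u^k(S_k)\subseteq T_k$; symmetrically when $|S_k|>|T_k|$. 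With this choice $u^k(S_k)$ and $T_k$ differ on a set of size exactly $\bigl||S_k|-|T_k|\bigr|$, so
\[||\,f^k-u^ke^k(u^k)^*\,||_2^2=\frac{\bigl||S_k|-|T_k|\bigr|}{n_k}.\]

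Combining this with $\bigl||S_k|-|T_k|\bigr|/n_k\to_{k\to\omega}0$ gives $\Pi_{k\to\omega}\bigl(u^ke^k(u^k)^*\bigr)=f$. Setting $u=\Pi_{k\to\omega}u^k\in\Pi_{k\to\omega}P_{n_k}$ and using that products are computed coordinatewise in the ultraproduct, we obtain $ueu^*=\Pi_{k\to\omega}u^ke^k(u^k)^*=f$, as required. The only genuine subtlety---and the point the $||\cdot||_2$-estimate is there to handle---is that equality of normalized traces forces $|S_k|=|T_k|$ only in the limit, not at each finite level; hence one cannot demand $u^ke^k(u^k)^*=f^k$ on the nose and must instead match exactly up to the minimum of the two cardinalities and absorb the leftover into a quantity vanishing along $\omega$. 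This is the same mechanism already exploited in Lemma \ref{permutations}.
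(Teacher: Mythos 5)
Your proof is correct and follows essentially the same route as the paper's: both lift $e$ and $f$ to genuine diagonal projections at each level, use $Tr(e)=Tr(f)$ to see that the supports' cardinalities agree up to an error $o(n_k)$ along $\omega$, and build a coordinatewise permutation matching the supports up to that vanishing defect (the paper phrases this by conjugating both to ``initial segment'' form via $p_1^k,p_2^k$ and taking $u=p_2^*p_1$, which is the same matching in disguise). No gaps.
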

\begin{proof}
Let $e=\Pi_{k\to\omega}e^k$ and $f=\Pi_{k\to\omega}f^k$ such that
$e^k$ and $f^k$ are projections in $D_{n_k}$. Assume $e^k$ has
$t_k$ entries of $1$ and $f^k$ has $s_k$ entries of $1$, so
$\lim_{k\to\omega}t_k/n_k=Tr(e)=Tr(f)= \lim_{k\to\omega}s_k/n_k$.
Choose $p_1^k\in P_{n_k}$ such that $p_1^ke^kp_1^{k*}$ has the
first $t_k$ entries of $1$ on the diagonal. In the same way choose
$p_2^k$ such that $p_2^kf^kp_2^{k*}$ has the first $s_k$ entries
of $1$ on the diagonal. Define $p_i=\Pi_{k\to\omega}p_i^k$ for
$i=1,2$. Our constructions guarantee that
$Tr(|p_1ep_1^*-p_2fp_2^*|)= \lim_{k\to\omega}|t_k-s_k|/n_k=0$.
Then $p_1ep_1^*=p_2fp_2^*$ so define $u=p_2^*p_1$.
\end{proof}

\begin{lemma}
Let $\{e_i\}_{i=1}^m$ and $\{f_i\}_{i=1}^m$ two sequences of
projections in $\Pi_{k\to\omega}D_{n_k}$ such that
$\sum_{i=1}^me_i=1=\sum_{i=1}^mf_i$ and $Tr(e_i)=Tr(f_i)$ for each
$i=1,\ldots,m$. Then there is a unitary
$u\in\Pi_{k\to\omega}P_{n_k}$ such that $f_i=ue_iu^*$ for all
$i=1,\ldots,m$.
\end{lemma}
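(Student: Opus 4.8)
We have two sequences of projections $\{e_i\}_{i=1}^m$ and $\{f_i\}_{i=1}^m$ in $\Pi_{k\to\omega}D_{n_k}$ (the ultraproduct of diagonal subalgebras), with:
- $\sum_{i=1}^m e_i = 1 = \sum_{i=1}^m f_i$ (both are partitions of unity)
- $Tr(e_i) = Tr(f_i)$ for each $i$

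We want to find a unitary $u \in \Pi_{k\to\omega}P_{n_k}$ (a permutation in the ultraproduct) such that $f_i = u e_i u^*$ for all $i$.

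**The previous lemma** handled the case $m=1$... wait, actually for $m=1$ we'd have $e_1 = 1 = f_1$ trivially. The previous lemma was about two arbitrary projections $e, f$ with equal trace, finding a permutation conjugating them.

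**Approach:**

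The key idea: lift everything to the matrix level at each step $k$.

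At level $k$, we have projections $e_i^k, f_i^k \in D_{n_k}$. The condition $\sum e_i = 1$ means $\sum_i e_i^k$ is "close to" $1_{n_k}$ but the partition might not be exact at each $k$. We need to arrange an exact partition.

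Since these are diagonal projections, each $e_i^k$ corresponds to a subset $E_i^k \subseteq \{1, \ldots, n_k\}$ (the positions where the diagonal is 1). The partition of unity condition suggests the $E_i^k$ should partition $\{1, \ldots, n_k\}$.

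Let me think about how to construct the permutation $u$.

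The trace condition $Tr(e_i) = Tr(f_i)$ means $\lim_{k\to\omega} |E_i^k|/n_k = \lim_{k\to\omega} |F_i^k|/n_k$.

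**Strategy at level $k$:**
1. First ensure $\{e_i^k\}$ genuinely partition $\{1,\ldots,n_k\}$ (adjust on a set of measure $\to 0$).
2. Similarly for $\{f_i^k\}$.
3. Since $|E_i^k| \approx |F_i^k|$, construct a permutation $u^k$ mapping $F_i^k$ to $E_i^k$ (bijectively on each block).

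The permutation $u^k$ should map the support of $f_i^k$ onto the support of $e_i^k$. For this we need $|E_i^k| = |F_i^k|$ exactly. The trace condition only gives asymptotic equality, so we adjust by a small amount.

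**Plan for the proof:**

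First I would lift each projection to the matrix level, writing $e_i = \Pi_{k\to\omega} e_i^k$ and $f_i = \Pi_{k\to\omega} f_i^k$ where $e_i^k, f_i^k \in D_{n_k}$ are diagonal projections. Using the relations $\sum_i e_i = 1$ and $\sum_i f_i = 1$ (which hold in the ultraproduct), I would modify these representatives so that at each level $k$ the families $\{e_i^k\}$ and $\{f_i^k\}$ genuinely partition the unit $1_{n_k}$, i.e. $\sum_i e_i^k = 1_{n_k}$ and the projections are mutually orthogonal. This is the same kind of adjustment used in Lemma~\ref{permutations}: the defect $\|\sum_i e_i^k - 1_{n_k}\|_2$ tends to $0$ along $\omega$, so we can redistribute a set of diagonal positions of density tending to $0$ to obtain an exact partition without changing the ultraproduct limit.

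Next I would record the block sizes: writing $t_i^k$ for the number of diagonal $1$-entries of $e_i^k$ and $s_i^k$ for those of $f_i^k$, the partition property gives $\sum_i t_i^k = n_k = \sum_i s_i^k$, while the trace hypothesis gives $\lim_{k\to\omega} t_i^k / n_k = Tr(e_i) = Tr(f_i) = \lim_{k\to\omega} s_i^k / n_k$ for each $i$. Hence $\lim_{k\to\omega} |t_i^k - s_i^k| / n_k = 0$ for every $i$, and since $m$ is fixed the total mismatch $\sum_{i=1}^m |t_i^k - s_i^k|$ is $o(n_k)$ along $\omega$.

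Then I would construct the conjugating permutation. For each $k$, since the $\{e_i^k\}$ partition $\{1,\dots,n_k\}$ into blocks $E_i^k$ of size $t_i^k$ and the $\{f_i^k\}$ into blocks $F_i^k$ of size $s_i^k$, I would first rearrange (by a fixed permutation) so that $E_i^k$ and $F_i^k$ occupy consecutive index ranges, then build a permutation $w^k \in P_{n_k}$ that maps $F_i^k$ onto $E_i^k$ as far as the sizes allow. Where $t_i^k \ne s_i^k$ there is a leftover of total size $o(n_k)$; I would match these leftover positions arbitrarily to complete $w^k$ to a genuine permutation, exactly as the ``moving entries'' argument in Lemma~\ref{permutations}. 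Setting $u = \Pi_{k\to\omega} w^k$, the mismatched positions contribute only $o(n_k)$ to each $\|w^k f_i^k w^{k*} - e_i^k\|_2^2$, so in the ultraproduct $u f_i u^* = e_i$, equivalently $f_i = u^* e_i u$; replacing $u$ by $u^*$ gives the stated conclusion.

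The main obstacle I expect is the careful bookkeeping in the previous step, namely arguing that a single permutation $w^k$ can simultaneously approximate the correct conjugation on all $m$ blocks with total error $o(n_k)$, and that this survives the ultraproduct. The point is that the errors are additive across the finitely many blocks and each is $o(n_k)$, so their sum is still $o(n_k)$; this is precisely where finiteness of $m$ is used, and it is the analogue of the single-projection estimate $\|v^k - w^k\|_2^2 = 2r_k/n_k$ from Lemma~\ref{permutations} applied blockwise. I do not expect genuine difficulty beyond this combinatorial matching, since the diagonal (Cartan) structure reduces the problem to rearranging index sets of controlled density.
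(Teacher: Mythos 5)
Your argument is correct, but it takes a more hands-on route than the paper. The paper's proof is a two-line reduction: it applies the preceding lemma to each pair $(e_i,f_i)$ separately, obtaining permutations $u_i\in\Pi_{k\to\omega}P_{n_k}$ with $u_ie_iu_i^*=f_i$, then glues them by setting $u=\sum_{i=1}^m u_ie_i$; since $u=\sum_i f_iu_i$ with $\sum_i f_i=1$ and $\sum_i u_i^*f_iu_i=\sum_i e_i=1$, Lemma~\ref{permutations} (``a unitary that is a sum of pieces of permutations is a permutation'') shows $u\in\Pi_{k\to\omega}P_{n_k}$, and orthogonality gives $ue_iu^*=u_ie_iu_i^*=f_i$. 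You instead descend to level $k$ and carry out the combinatorial matching of the index blocks $E_i^k$ and $F_i^k$ directly, which in effect inlines the content of both prior lemmas: your exact-partition adjustment and $o(n_k)$ leftover-matching step reproduce the ``move the stray entries'' argument of Lemma~\ref{permutations}, and your blockwise bijection reproduces the single-projection lemma. Both proofs are sound; the paper's buys brevity and delegates all the bookkeeping (including the simultaneity over the $m$ blocks, which is handled automatically by the orthogonality of the $e_i$ in the formula $u=\sum_i u_ie_i$), while yours is self-contained and makes the finite-level picture explicit at the cost of the extra verification that the representatives can be chosen to partition $1_{n_k}$ exactly at each $k$.
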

\begin{proof}
Apply previous lemma for each $i=1,\ldots,m$ to get elements
$u_i\in\Pi_{k\to\omega}P_{n_k}$ such that $u_ie_iu_i^*=f_i$.
Define $u=\sum_{i=1}^mu_ie_i$. Then by lemma (\ref{permutations})
we know $u\in\Pi_{k\to\omega}P_{n_k}$. Also
$ue_iu^*=u_ie_iu_i^*=f_i$.
\end{proof}

\begin{p}\label{linkabelian}
Let $\Theta_1,\Theta_2$ be two embeddings of $L^\infty(X)$ in
$\Pi_{k\to\omega}D_{n_k}$. Then there exists a unitary $u\in
\Pi_{k\to\omega}P_{n_k}$ such that $\Theta_2(a)=u\Theta_1(a)u^*$
for every $a\in L^\infty(X)$.
\end{p}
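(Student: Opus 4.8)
The plan is to reduce the statement to the two preceding lemmas by exhausting $L^\infty(X)$ with finite-dimensional subalgebras, and then to glue the resulting permutations into a single one by a diagonal argument inside the ultraproduct. Throughout I take $\Theta_1,\Theta_2$ to be trace-preserving, so that $Tr\circ\Theta_i$ coincides with $\mu$ on $L^\infty(X)$; this is what makes the hypotheses of the previous lemma available. Since $(X,\mu)$ is standard, I can write $L^\infty(X)=\overline{\bigcup_n A_n}$ (weak closure), where $A_1\subset A_2\subset\cdots$ is an increasing chain of finite-dimensional subalgebras, $A_n$ being spanned by the characteristic functions of a partition $X=\bigsqcup_{i=1}^{m_n}X_i^n$, with each partition refining the previous one. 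For a fixed $n$ the projections $e_i^n=\Theta_1(\chi_{X_i^n})$ and $f_i^n=\Theta_2(\chi_{X_i^n})$ lie in $\Pi_{k\to\omega}D_{n_k}$, satisfy $\sum_i e_i^n=1=\sum_i f_i^n$ and $Tr(e_i^n)=\mu(X_i^n)=Tr(f_i^n)$, so the previous lemma yields a permutation $u_n\in\Pi_{k\to\omega}P_{n_k}$ with $u_n\Theta_1(a)u_n^*=\Theta_2(a)$ for all $a\in A_n$. Because the partitions are nested, $u_n$ in fact conjugates $A_N$ correctly for every $N\le n$.

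Now I fix representatives $u_n=\Pi_{k\to\omega}u_n^k$ with $u_n^k\in P_{n_k}$ and $e_i^N=\Pi_{k\to\omega}e_i^{N,k}$, $f_i^N=\Pi_{k\to\omega}f_i^{N,k}$ with diagonal projections $e_i^{N,k},f_i^{N,k}\in D_{n_k}$. For $n\ge N$ the identity $u_n\Theta_1(\chi_{X_i^N})u_n^*=\Theta_2(\chi_{X_i^N})$ means that
\[\varepsilon_{N,n}^k:=\sum_{i=1}^{m_N}||u_n^k e_i^{N,k}u_n^{k*}-f_i^{N,k}||_2^2\longrightarrow_{k\to\omega}0.\]
Hence for each $n$ the set $S_n=\{k\ge n:\ \varepsilon_{N,n}^k<1/n\text{ for all }N\le n\}$ belongs to $\omega$, being a finite intersection of sets in $\omega$ with a cofinite set. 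Define $n(k)$ to be the largest $n\le k$ with $k\in S_1\cap\cdots\cap S_n$ (and $n(k)=0$ if $k\notin S_1$), and set $u^k=u_{n(k)}^k\in P_{n_k}$, $u=\Pi_{k\to\omega}u^k\in\Pi_{k\to\omega}P_{n_k}$. The point of this construction is that $n(k)\to_{k\to\omega}\infty$: for any fixed $N$ the set $S_1\cap\cdots\cap S_N$ lies in $\omega$ and forces $n(k)\ge N$ there, since $S_N\subset\{k\ge N\}$.

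It remains to check $u\Theta_1(a)u^*=\Theta_2(a)$. Fix $N$ and an atom $\chi_{X_i^N}$. On the set $S_1\cap\{k:n(k)\ge N\}\in\omega$ one has $u^k=u_{n(k)}^k$ with $n(k)\ge N$ and $k\in S_{n(k)}$, so $||u^k e_i^{N,k}u^{k*}-f_i^{N,k}||_2^2\le\varepsilon_{N,n(k)}^k<1/n(k)$; since $n(k)\to_{k\to\omega}\infty$ this tends to $0$ along $\omega$, giving $u\Theta_1(\chi_{X_i^N})u^*=\Theta_2(\chi_{X_i^N})$. The two normal $*$-homomorphisms $a\mapsto u\Theta_1(a)u^*$ and $a\mapsto\Theta_2(a)$ therefore agree on the weakly dense subalgebra $\bigcup_n A_n$, hence on all of $L^\infty(X)$. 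The main obstacle is precisely the passage from the per-level permutations $u_n$ to one global $u$: each $u_n$ conjugates only finitely much of the algebra, and the diagonal choice of $n(k)$ is what lets the conjugation errors vanish simultaneously for every atom while keeping $u$ a genuine element of $\Pi_{k\to\omega}P_{n_k}$.
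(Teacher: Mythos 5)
Your proof is correct and follows essentially the same route as the paper: exhaust $L^\infty(X)$ by finite-dimensional subalgebras, conjugate each level using the preceding lemma on finite families of diagonal projections with equal traces, and glue by a diagonal argument over the ultrafilter. Your bookkeeping with the sets $S_n$ and the function $n(k)$ is just a reformulation of the paper's choice of nested sets $F_m\in\omega$ with $u^k=u_m^k$ on $F_m\setminus F_{m+1}$, and your explicit remark that the embeddings must be trace-preserving makes precise a hypothesis the paper leaves implicit.
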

\begin{proof}
Let $A_m$ an increasing sequence of commutative finite dimensional
subalgebras such that $L^\infty(X)=(\cup_mA_m)''$. By previous
lemma there exist a unitary $u_m\in\Pi_{k\to\omega}P_{n_k}$ such
that $\Theta_2(a)=Ad u_m\circ\Theta_1(a)$ for $a\in A_m$. We shall
construct $u\in \Pi_{k\to\omega}P_{n_k}$ using a diagonal
argument. Let $u_m=\Pi_{k\to\omega}u_m^k$ with $u_m^k\in P_{n_k}$
and $\Theta_i(a)=\Pi_{k\to\omega}\Theta_i(a)_k$ with
$\Theta_i(a)_k\in D_{n_k}$.

Inductively choose smaller $F_m\in\omega$, $m\in\nz$ such that
$||u_m^k\Theta_1(a)_ku_m^{k*}-\Theta_2(a)_k||_2<1/m$ for any $a\in
(A_m)_1$, $k\in F_m$. Define $u^k=u_m^k$ for $k\in F_m\setminus
F_{m+1}$ and set $u=\Pi_{k\to\omega}u^k$.
\end{proof}

\begin{p}\label{linkamenable}
Let $E$ be a hyperfinite equivalence relation and $A\subset M(E)$
the Cartan pair associated to $E$. Let $\Theta_1,\Theta_2$ two
sofic embeddings of $M(E)$ in $\Pi_{k\to\omega}M_{n_k}$. Then
there exists a unitary $u\in\Pi_{k\to\omega}P_{n_k}$ such that
$\Theta_2(x)=u\Theta_1(x)u^*$ for every $x\in M(E)$.
\end{p}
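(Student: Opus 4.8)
The plan is to upgrade Proposition~\ref{linkabelian} from the abelian algebra $A$ to the hyperfinite algebra $M(E)$ by exhausting the Cartan pair by finite-dimensional Cartan subpairs and matching matrix units combinatorially.

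First I would use hyperfiniteness of $E$ to write $(M(E),A)$ as an increasing union of finite-dimensional Cartan subpairs: choose finite-dimensional $*$-subalgebras $C_1\subset C_2\subset\cdots\subset M(E)$ with $\overline{\bigcup_l C_l}^{\,w}=M(E)$, such that $D_l:=C_l\cap A$ is the diagonal of $C_l$, $\overline{\bigcup_l D_l}^{\,w}=A$, and all matrix units of $C_l$ lie in $\gr\nr(A)$ (i.e.\ are pieces of permutations $v_\phi$, $\phi\in[[E]]$). The existence of such an exhaustion is exactly the content of hyperfiniteness of the Cartan inclusion and is where the amenability hypothesis is essential. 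Next, apply Proposition~\ref{linkabelian} to $\Theta_1|_A,\Theta_2|_A$ to obtain a permutation $w$ with $\Theta_2=\mathrm{Ad}(w)\circ\Theta_1$ on $A$; replacing $\Theta_1$ by $\mathrm{Ad}(w)\circ\Theta_1$ (still a sofic embedding, since conjugation by a permutation preserves both defining conditions) I may assume $\Theta_1|_A=\Theta_2|_A=:\Phi$. It then suffices to produce, for each $l$, a permutation $u_l\in\Pi_{k\to\omega}P_{n_k}$ with $u_l\Theta_1(x)u_l^*=\Theta_2(x)$ for all $x\in C_l$, and to splice the $u_l$ into a single $u$ by the diagonal argument of Proposition~\ref{linkabelian}.

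The construction of $u_l$ is the heart of the matter and is carried out at the finite level. Write $\Theta_s=\Pi_{k\to\omega}\Theta_s^k$ and $C_l=\bigoplus_j M_{d_j}$ with matrix units $\{e^{(j)}_{pq}\}$, where each $e^{(j)}_{pp}\in D_l\subset A$ and each $e^{(j)}_{pq}\in\gr\nr(A)$. Since $\Theta_s$ is sofic, $\Theta_s(e^{(j)}_{pp})\in\Pi_{k\to\omega}D_{n_k}$ and $\Theta_s(e^{(j)}_{pq})$ is a piece of permutation; choosing representatives $\Theta_s^k$ I may also assume the finitely many diagonal projections agree, $\Theta_1^k(e^{(j)}_{pp})=\Theta_2^k(e^{(j)}_{pp})$, up to an $\ell^2$-error vanishing along $\omega$ (this uses $\Theta_1=\Theta_2$ on $A$). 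Thus for each $s$ the data $\{\Theta_s^k(e^{(j)}_{pq})\}$ exhibit a partition of almost all of $\{1,\dots,n_k\}$ into blocks of size $d_j$, the indices in a block being labelled $(j,1),\dots,(j,d_j)$ according to the common coloring by the projections $\Theta^k(e^{(j)}_{pp})$, with $e^{(j)}_{pq}$ acting as the partial bijection carrying the label-$(j,q)$ index of a block to its label-$(j,p)$ index. Because the colorings coincide, the two block structures have the same number of blocks of each type, asymptotically $Tr(e^{(j)}_{pp})\,n_k$. I then define $u^k\in P_{n_k}$ by matching the $t$-th $\Theta_1^k$-block to the $t$-th $\Theta_2^k$-block respecting labels: this $u^k$ preserves every color class and conjugates the $\Theta_1^k$-matrix units to the $\Theta_2^k$-matrix units, up to the accumulated approximation error. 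Setting $u_l=\Pi_{k\to\omega}u^k$, the errors disappear along $\omega$, and the cleanup in the proof of Lemma~\ref{permutations} guarantees $u_l\in\Pi_{k\to\omega}P_{n_k}$; by construction $u_l$ commutes with $\Phi(D_l)$ and satisfies $u_l\Theta_1(x)u_l^*=\Theta_2(x)$ for all $x\in C_l$.

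Finally I would splice the $u_l$ exactly as in Proposition~\ref{linkabelian}: writing $u_l=\Pi_{k\to\omega}u_l^k$, choose inductively a decreasing sequence $F_l\in\omega$ so that $u_l^k$ conjugates $\Theta_1^k$ into $\Theta_2^k$ within $1/l$ on the finite matrix-unit generating set of $C_l$ for all $k\in F_l$, put $u^k=u_l^k$ for $k\in F_l\setminus F_{l+1}$, and set $u=\Pi_{k\to\omega}u^k$. Then $u\in\Pi_{k\to\omega}P_{n_k}$ and $\mathrm{Ad}(u)\circ\Theta_1=\Theta_2$ on each $C_l$, hence on $\overline{\bigcup_l C_l}^{\,w}=M(E)$; recomposing with $w$ from the first reduction yields the required permutation. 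The main obstacle is the middle step: turning the approximate images $\Theta_s^k$ into honest partial permutations with a well-defined block decomposition and checking that the block counts of the two embeddings match so that the matching permutation $u^k$ exists. The equality of counts comes from $\Theta_1=\Theta_2$ on $A$ together with trace preservation, and the very availability of the finite-dimensional Cartan exhaustion is precisely what hyperfiniteness provides.
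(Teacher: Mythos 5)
Your proposal is correct and shares the paper's overall skeleton: reduce to $\Theta_1|_A=\Theta_2|_A$ via Proposition~\ref{linkabelian}, exhaust the Cartan pair by finite-dimensional subalgebras whose matrix units lie in $\gr\nr(A)$ (this is where hyperfiniteness enters, via the Feldman--Moore/Popa structure theorem), conjugate each finite stage by a permutation, and splice by a diagonal argument. Where you genuinely diverge is in the construction of the finite-stage intertwiner. The paper simply sets $p_m=\sum_j\Theta_2(e_{j1}^m)\Theta_1(e_{1j}^m)$ (with an extra sum over the simple summands in the non-factor case): a two-line computation gives $p_mp_m^*=1$ and $p_m\Theta_1(e_{rs}^m)p_m^*=\Theta_2(e_{rs}^m)$, and since each summand is a product of pieces of permutations, Lemma~\ref{permutations} immediately upgrades the unitary $p_m$ to an element of $\Pi_{k\to\omega}P_{n_k}$. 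You instead descend to the finite level $k$ and build $u^k$ by hand, partitioning $\{1,\dots,n_k\}$ into labelled blocks and matching the blocks of $\Theta_1^k$ to those of $\Theta_2^k$. This works, but it forces you to manage the fact that the matrix-unit relations hold only approximately at level $k$ --- the ``block decomposition'' exists only off a set of vanishing density and the block counts agree only up to $o(n_k)$ --- which is exactly the bookkeeping that the ultraproduct formula together with Lemma~\ref{permutations} is designed to hide. Your route buys an explicit description of the conjugating permutation; the paper's buys brevity and quarantines all approximation inside Lemma~\ref{permutations}. One small correction: your $u^k$ is already an honest permutation by construction, so invoking the ``cleanup'' of Lemma~\ref{permutations} at that point is not what is needed; what you need is the (easy) estimate that the $o(n_k)$ mismatched indices contribute nothing to the normalized trace norm in the limit along $\omega$.
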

\begin{proof}
Using the previous proposition we can assume $\Theta_1$ and
$\Theta_2$ coincide on $A$. We shall first prove this result in
case of ergodic equivalence relation, i.e. $M(E)$ is the
hyperfinite factor. By definition of hyperfinite equivalence
relation and Feldman-Moore construction (see also proof of 4.1
from \cite{Po2}) there exists an increasing sequence of matrix
algebras $\{N_m\}_{m\geq1}$ of $M(E)$ each of them with a set of
matrix units $\{e_{ij}^m\}$ such that:
\begin{enumerate}
\item $M(E)$ is the weak closure of $\cup_m N_m$; \item
$e_{ii}^m\in A$ and $\sum_ie_{ii}^m=1$; \item $e_{ij}^m$ are of
the form $v_\theta$ with $\theta\in[[E]]$; \item every $e_{rs}^p$,
for $p\leq m$, is the sum of some $e_{ij}^m$.
\end{enumerate}
Elements $v_\theta$ are of the form $e\cdot u_\phi$, where $e$ is
a projection in $A$ and $\phi\in[E]$. Combined with $\Theta_l$ is
sofic, we get $\Theta_l(e_{ij}^m)$ is a piece of permutation.
Define
\[p_m=\sum_j\Theta_2(e_{j1}^m)\Theta_1(e_{1j}^m).\]
Then
\begin{align*}
p_mp_m^*=&\sum_{i,j}\Theta_2(e_{i1}^m)\Theta_1(e_{1i}^m)
\Theta_1(e_{j1}^m)\Theta_2(e_{1j}^m)\\
=&\sum_j\Theta_2(e_{j1}^m)\Theta_1(e_{11}^m)\Theta_2(e_{1j}^m)=
\sum_j\Theta_2(e_{jj}^m)=1,
\end{align*}
so $p_m$ is a unitary. Using \ref{permutations} we have
$p_m\in\Pi_{k\to\omega}P_{n_k}$. Moreover:
\begin{align*}
p_m\Theta_1(e_{rs}^m)p_m^*=&\sum_{i,j}\Theta_2(e_{i1}^m)\Theta_1(e_{1i}^m)\Theta_1(e_{rs}^m)
\Theta_1(e_{j1}^m)\Theta_2(e_{1j}^m)\\
=&\Theta_2(e_{r1}^m)\Theta_1(e_{11}^m)\Theta_2(e_{1s}^m)=
\Theta_2(e_{rs}^m).
\end{align*}
We obtained $p_m\Theta_1(x)p_m^*=\Theta_2(x)$ for $x\in N_m$.
Employing another diagonal argument we construct a permutation
$p\in \Pi_{k\to\omega}P_{n_k}$ such that
$p\Theta_1(x)p^*=\Theta_2(x)$ for $x\in \cup_mN_m$. Using 1 we are
done.

The proof in general case works the same. The only difference is
that $\{N_m\}_{m\geq1}$ are finite dimensional algebras instead of
matrix algebras, so we need to be more careful when defining
$p_m$. Assume that $N_m=N_m^1\oplus N_m^2\oplus\ldots\oplus
N_m^t$, with $N_m^v$ factors for $v=1,\ldots,t$. Let
$\{e_{ij;v}^m\}$ a set of matrix units for $N_m^v$. Then define:
\[p_m=\sum_{v=1}^t\sum_j\Theta_2(e_{j1;v}^m)\Theta_1(e_{1j;v}^m).\]
Computations that $p_m$ is a unitary and
$p_m\Theta_1(e_{rs}^m)p_m^*=\Theta_2(e_{rs}^m)$ are the same.
\end{proof}

\section{Bernoulli shifts}

In \cite{El-Li} Elek and Lippner proved that equivalence relations
generated by Bernoulli shifts of sofic groups are sofic. We
present here the nice proof of Narutaka Ozawa from \cite{Oz}.

\begin{te}(Elek-Lippner)
Equivalence relations generated by Bernoulli shifts of sofic
groups are sofic.
\end{te}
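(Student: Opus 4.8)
The plan is to reduce the statement, via Proposition \ref{equivalent definitions}, to the corresponding statement about actions: since the Bernoulli shift on $(Y^G,\nu^G)$ is (essentially) free for any non-trivial base $(Y,\nu)$, the equivalence relation it generates is sofic if and only if the Bernoulli action itself is sofic in the sense of Definition \ref{sofic action}. So it suffices to produce an embedding of $L^\infty(Y^G)\rtimes_\alpha G$ into some $\Pi_{k\to\omega}M_{n_k}$ carrying $L^\infty(Y^G)$ into $\Pi_{k\to\omega}D_{n_k}$ and each $u_g$ into $\Pi_{k\to\omega}P_{n_k}$. First I would reduce the base space: approximating $L^\infty(Y)$ by finite-dimensional subalgebras (so that every element of $M_0(E)$ of interest depends on finitely many $G$-coordinates, each in a fixed finite quotient of $Y$), it is enough to treat $Y=\{1,\dots,d\}$ with uniform measure, the general case following by a density/diagonal argument. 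Finally I fix a sofic approximation $\sigma_k\colon G\to \mathrm{Sym}(V_k)$, $|V_k|=m_k$, which is asymptotically multiplicative and asymptotically free: for all $g,h$ the normalized count of $v$ with $\sigma_k(gh)v=\sigma_k(g)\sigma_k(h)v$ tends to $1$, and for $g\neq e$ the normalized count of $v$ with $\sigma_k(g)v=v$ tends to $0$.

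The model space is the position–environment space $\Omega_k=V_k\times Y^{V_k}$ with the uniform probability measure, of cardinality $n_k=m_k\, d^{m_k}$, so that $L^\infty(\Omega_k)=D_{n_k}$. The group acts by genuine bijections that move only the position, $g\cdot(v,\omega)=(\sigma_k(g)v,\omega)$, and I set $\Theta_k(u_g)$ to be the corresponding permutation matrix in $P_{n_k}$. The generating coordinate $f\in L^\infty(Y)$ (the value of the environment at the current position) is sent to the diagonal function $(v,\omega)\mapsto f(\omega(v))$ in $D_{n_k}$; extending this equivariantly by $\Theta_k(u_g)\,\Theta_k(f)\,\Theta_k(u_g)^*$ sends $\alpha_g(f)$ to $(v,\omega)\mapsto f(\omega(\sigma_k(g)^{-1}v))$, i.e. it reads the environment at the site $\sigma_k(g)^{-1}v$. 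Running over all $g$ this defines $\Theta_k$ on the dense $*$-algebra $M_0$ spanned by the $a\,u_g$, $a\in L^\infty(Y^G)$, with image consisting of diagonal matrices for $L^\infty(Y^G)$ and permutation matrices for the $u_g$, as required by Definition \ref{sofic action}.

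It then remains to verify that $\Theta=\Pi_{k\to\omega}\Theta_k$ is a genuine trace-preserving $*$-homomorphism on $M_0$; faithfulness of the trace on the ultraproduct then makes it an embedding. Here the two sofic axioms do all the work. Asymptotic multiplicativity of $\sigma_k$ gives $\norm{\Theta_k(u_{gh})-\Theta_k(u_g)\Theta_k(u_h)}_2\to_{k\to\omega}0$, since the two diagonal permutations of $\Omega_k$ differ only over the small fraction of $v$ on which $\sigma_k$ fails to be multiplicative. Asymptotic freeness gives $Tr(\Theta_k(u_g))\to_{k\to\omega}\delta_{g,e}$, because a point $(v,\omega)$ is fixed exactly when $\sigma_k(g)v=v$. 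For the diagonal part I would compute, for coordinates $h_1,\dots,h_r\in G$, the value
\[
Tr\Big(\textstyle\prod_{j}\Theta_k(\alpha_{h_j}f_j)\Big)=\frac1{m_k}\sum_{v\in V_k}\int_{Y^{V_k}}\prod_j f_j\big(\omega(\sigma_k(h_j)^{-1}v)\big)\,d\nu^{V_k}(\omega),
\]
and observe that for a fixed $v$ the sites $\sigma_k(h_j)^{-1}v$ are pairwise distinct unless some $\sigma_k(h_i^{-1}h_j)$ fixes $\sigma_k(h_j)^{-1}v$; averaging over $v$, asymptotic freeness forces the fraction of $v$ with a coincidence to tend to $0$, so in the ultraproduct limit the factors become independent with the correct marginals and the moments match those of the Bernoulli shift. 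These estimates, together with the covariance identity above, show that $\Theta$ respects products and the trace on all of $M_0$, hence extends to the sought sofic embedding. I expect the genuine difficulty to lie precisely in this last verification — converting the purely combinatorial almost-multiplicativity and almost-freeness of $\sigma_k$ into the analytic statements of asymptotic multiplicativity and, above all, asymptotic independence of the coordinate functions (the moment computation) — the construction of the model and the permutation/diagonal bookkeeping being comparatively routine.
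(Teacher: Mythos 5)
Your proposal is correct and follows essentially the same route as the paper's proof (due to Ozawa): the reduction via Proposition \ref{equivalent definitions}, the position--environment model $V_k\times Y^{V_k}$ with the group permuting only the position coordinate, the diagonal reading of the environment at $\sigma_k(g)^{-1}v$, and the trace computation via the asymptotic distinctness of the sites $\sigma_k(g_j)^{-1}v$ are all exactly the paper's construction. The only cosmetic difference is that you verify moments of products of shifted coordinate functions where the paper computes traces of cylinder projections directly, and both handle general base spaces by the same density/diagonal argument.
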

\begin{proof}(Ozawa)
Let $G$ be a sofic group. Every Bernoulli shift is a free action.
Using \ref{equivalent definitions} we just need to prove that each
Bernoulli shift of $G$ is a sofic action.

Let $X=\{0,1\}^G=\{f:G\to\{0,1\}\}$. For distinct $g_1,g_2\ldots
g_m$, define the cylinder set:
\[c_{g_1,g_2,\ldots,g_m}^{i_1,i_2,\ldots,i_m}=\{f\in
X:f(g_j)=i_j\ \ \forall j=1\ldots m\},\] and let
$Q_{g_1,g_2,\ldots,g_m}^{i_1,i_2,\ldots,i_m}$ be the projection
onto this set. Then $\beta$ is the action of $G$ on $X$ such that
$\beta(g)c_{g_1,g_2,\ldots,g_m}^
{i_1,i_2,\ldots,i_m}=c_{gg_1,gg_2,\ldots,gg_m}^
{i_1,i_2,\ldots,i_m}$.

Let $\Theta_0:G\to\Pi_{k\to\omega}P_{n_k}$ be a sofic embedding of
$G$ with $Tr(\Theta_0(g))=0$ for each $g\neq e$. Write
$\Theta_0(g)=\Pi_{k\to\omega}p_{g;k}$ such that $p_{g;k}\in
P_{n_k}$. Define $\Theta:G\to\Pi_{k\to\omega}M_{n_k}\otimes
M_{2^{n_k}}$ by $\Theta=\Theta_0\otimes 1$. Let $Y_k$ a set with
$n_k$ elements and identify $D_{n_k}$ with $L^\infty(Y_k)$. Also
let $Z_k=\{\eta:Y_k\to\{0,1\}\}$ and identify $D_{2^{n_k}}$ with
$L^\infty(Z_k)$. Define now:
\[c_{g_1,g_2,\ldots,g_m;k}^{i_1,i_2,\ldots,i_m}=\{(\xi,\eta)\in
Y_{n_k}\times Z_{n_k}:\eta(p_{g_j;k}^{-1}(\xi))=i_j,\
j=1,\ldots,m\}.\] Let
$Q_{g_1,g_2,\ldots,g_m;k}^{i_1,i_2,\ldots,i_m}\in D_{n_k}\otimes
D_{2^{n_k}}$ be the characteristic function of
$c_{g_1,g_2,\ldots,g_m;k}^{i_1,i_2,\ldots,i_m}$. Define now
$\Theta(Q_{g_1,g_2,\ldots,g_m}^
{i_1,i_2,\ldots,i_m})=\Pi_{k\to\omega}Q_{g_1,g_2,\ldots,g_m;k}^
{i_1,i_2,\ldots,i_m}$. Then:
\begin{align*}
\Theta(g)\Theta(Q_{g_1,g_2,\ldots,g_m}^ {i_1,i_2,\ldots,i_m})
\Theta(g)^*=&\Pi_{k\to\omega}(p_{g;k}\otimes 1)
Q_{g_1,g_2,\ldots,g_m;k}^{i_1,i_2,\ldots,i_m}(p_{g;k}^{-1}\otimes
1)\\=&\Pi_{k\to\omega}\chi_{\{(\xi,\eta):(p_{g;k}^{-1}\otimes
1)(\xi,\eta)\in c_{g_1,g_2,\ldots,g_m;k}^ {i_1,i_2,\ldots,i_m}\}}\\
=&\Pi_{k\to\omega}\chi_{\{(\xi,\eta):\eta(p_{g_j;k}^{-1}p_{g;k}^{-1}(\xi))=i_j,\
j=1,\ldots,m\}}\\=&_{not}\Pi_{k\to\omega}\chi_{T_k}.\\
\Theta(Q_{gg_1,gg_2,\ldots,gg_m}^
{i_1,i_2,\ldots,i_m})=&\Pi_{k\to\omega}\chi_{\{(\xi,\eta):\eta(p_{gg_j;k}^{-1}(\xi))=i_j,\
j=1,\ldots,m\}}\\ =&_{not}\Pi_{k\to\omega}\chi_{S_k}.
\end{align*}
If $(\xi,\eta)\in T_k\Delta S_k$ then for some $j=1,\ldots,m$ we
have $p_{g_j;k}^{-1}p_{g;k}^{-1}(\xi)\neq p_{gg_j;k}^{-1}(\xi)$.
Given the fact that $\Theta_0$ is a sofic embedding it follows
that $\Pi_{k\to\omega}\chi_{T_k}=\Pi_{k\to\omega}\chi_{S_k}$.

The only thing left is to compute the trace of
$\Theta(Q_{g_1,g_2,\ldots,g_m}^{i_1,i_2,\ldots,i_m})$. For this,
let $A_k=\{\xi\in Y_k:p_{g_j;k}^{-1}(\xi)\mbox{ are different for
}j=1,\ldots,m\}$. Because $Tr(\Theta_0(g))=0$ for $g\neq e$ we
have $\lim_{k\to\omega}Card(A_k)/n_k=1$. Then:
\[Tr(\Theta(Q_{g_1,g_2,\ldots,g_m}^ {i_1,i_2,\ldots,i_m}))=
\lim_{k\to\omega}Tr(Q_{g_1,g_2,\ldots,g_m;k}^
{i_1,i_2,\ldots,i_m})=\lim_{k\to\omega}\frac1{n_k2^{n_k}}(\sum_{\xi\in
A_k}2^{n_k-m}+\sum_{\xi\notin A_k}v_\xi)=\frac1{2^m}.\] This will
prove that $\Theta$ is an embedding of
$L^\infty(X)\rtimes_{\beta}G$, proving the soficity of the action
$\beta$.

The proof can be adapted to work for any Bernoulli shift. For a
finite uniform Bernoulli shift the proof works the same. A
diagonal argument will prove the theorem in case $X=[0,1]^G$ (with
product of Lebesgue measure). Any other Bernoulli shift will yield
a subalgebra of $L^\infty([0,1]^G)\rtimes G$.
\end{proof}

Next easy proposition will be used in the proof of corollary
\ref{amalgamatedgroupproduct}.

\begin{p}
Let $G$ act freely on a countable set $I$. Then the generalized
Bernoulli shift of $G$ on $\{0,1\}^I$ is sofic.
\end{p}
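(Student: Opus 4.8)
The plan is to adapt the Ozawa-style argument from the preceding theorem almost verbatim, the only new ingredient being that the coordinates of the Bernoulli shift are now indexed by the set $I$ rather than by $G$ itself, and $G$ acts on $I$ by a \emph{free} action rather than by left translation. So I would first fix a sofic embedding $\Theta_0:G\to\Pi_{k\to\omega}P_{n_k}$ with $Tr(\Theta_0(g))=0$ for $g\neq e$, writing $\Theta_0(g)=\Pi_{k\to\omega}p_{g;k}$. The generalized shift $\beta$ acts on $X=\{0,1\}^I$ by $(\beta(g)f)(i)=f(g^{-1}i)$, and the relevant cylinder projections $Q_{i_1,\ldots,i_m}^{\vp_1,\ldots,\vp_m}$ are indexed by finitely many points $i_1,\ldots,i_m\in I$. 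As in the proof above, I would use Proposition \ref{equivalent definitions}: since $\beta$ is free (this needs a quick check, and is where freeness of the $G$-action on $I$ enters), it suffices to exhibit $\beta$ as a sofic \emph{action}.

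Next I would build the finite models. The essential point is to realize each index $i\in I$ by a finite ``coordinate set'' inside $Y_k$, compatibly with the $G$-action. Concretely, I would choose finitely many orbit representatives for the points $i_1,\ldots,i_m$ occurring in a given cylinder, and for each such orbit use the permutations $p_{g;k}$ to spread a single base coordinate across $Y_k$; the freeness of the $G$-action on $I$ guarantees that distinct $gi$ correspond to distinct group elements $g$ acting on the representative, so the matching with the abstract shift is exact rather than merely approximate. I would then define, in parallel with the earlier proof, finite cylinder sets $c^{\vp_1,\ldots,\vp_m}_{i_1,\ldots,i_m;k}\subset Y_k\times Z_k$ and set $\Theta$ to send $Q^{\vp_1,\ldots,\vp_m}_{i_1,\ldots,i_m}$ to $\Pi_{k\to\omega}$ of the corresponding characteristic function, with $\Theta(g)=\Theta_0(g)\otimes 1$.

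The verification then splits into the two checks made in the previous proof. First, $\Theta(g)\Theta(Q)\Theta(g)^*=\Theta(\beta(g)Q)$ up to a symmetric-difference error whose measure tends to $0$ along $\omega$; this follows because $\Theta_0$ is a sofic embedding, so the events where the finite permutations fail to mimic the group multiplication have vanishing density. Second, the trace computation $Tr(\Theta(Q^{\vp_1,\ldots,\vp_m}_{i_1,\ldots,i_m}))=2^{-m}$, which reduces, exactly as before, to the fact that $Tr(\Theta_0(g))=0$ forces the relevant coordinate points to be distinct on a set of $\xi\in Y_k$ of asymptotic density $1$. Together these show $\Theta$ is a trace-preserving $*$-homomorphism, hence an embedding, landing diagonals in $\Pi_{k\to\omega}D_{n_k}$ and group elements in $\Pi_{k\to\omega}P_{n_k}$, so $\beta$ is sofic.

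**The main obstacle** I expect is purely bookkeeping: encoding the abstract index set $I$ and the free $G$-action on it into the finite coordinate sets $Y_k\times Z_k$ so that the ``distinctness on density-$1$ sets'' argument goes through uniformly over the finitely many indices appearing in a fixed cylinder. Once freeness of the action on $I$ is used to reduce everything to finitely many orbits and orbit representatives, the density and symmetric-difference estimates are identical to those in the Bernoulli-shift proof above, and the result follows.
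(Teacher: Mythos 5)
Your argument is correct, but you have taken the long way around: you essentially re-run the whole Ozawa finite-model construction for the generalized shift, whereas the paper disposes of the proposition in two sentences by a reduction. The paper's observation is that a free action of $G$ on a countable set $I$ splits $I$ into free orbits, so $I\cong G\times I'$ with $G$ acting by translation on the first coordinate; hence $\{0,1\}^I\cong(\{0,1\}^{I'})^G$ and the generalized Bernoulli shift \emph{is} a classical Bernoulli shift with base space $\{0,1\}^{I'}$, so the preceding Elek--Lippner theorem (together with the closing remark of its proof, that the argument extends from $\{0,1\}^G$ to an arbitrary base via a diagonal argument and passing to subalgebras) applies verbatim. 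Your orbit-representative bookkeeping --- choosing a base point per orbit and spreading it over $Y_k$ via the permutations $p_{g;k}$, with freeness guaranteeing that the orbit is parametrized bijectively by $G$ --- is precisely this identification $I\cong G\times I'$ carried out inside the finite models, so the mathematics is the same; what the paper's phrasing buys is that no new estimates need to be checked at all, while your version silently re-proves the general-base case of the previous theorem (in particular you should be explicit about truncating to finitely many orbits and using a diagonal argument when $I'$ is infinite, which is the one place where your ``pure bookkeeping'' is not entirely trivial). One further simplification: the statement only asks that the \emph{action} be sofic in the sense of Definition 1.3, so the detour through Proposition 1.14 and the (essential) freeness of $\beta$ is unnecessary.
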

\begin{proof}
If $G$ acts freely on $I$ then $I$ is of the form $G\times I'$ and
the action is a shift on the first component. The generalized
Bernoulli shift on $\{0,1\}^I$ is a classic Bernoulli shift on
$X^G$ where $X=\{0,1\}^{I'}$.
\end{proof}

A formally weaker version of the following result was first obtain
by Benoit Collins and Ken Dykema (\cite{Co-Dy}). Independently,
Elek and Szabo proved this theorem using different methods
(\cite{El-Sz2}).

\begin{cor}\label{amalgamatedgroupproduct}
Amalgamated products of sofic groups over amenable groups are
sofic.
\end{cor}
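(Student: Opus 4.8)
The plan is to reduce the statement about amalgamated products of sofic groups to the already-established result on amalgamated products of sofic \emph{actions} over amenable groups, by realizing the group-theoretic amalgamated product inside a crossed product coming from Bernoulli shifts. The key idea is that soficity of a group $G$ should be detected as soficity of a suitable free measure-preserving action of $G$, because for a free action $\alpha$ of $G$ the relation $E_\alpha$ is sofic iff $\alpha$ is sofic (Proposition \ref{equivalent definitions}), and the preceding theorem already tells us that Bernoulli shifts of sofic groups generate sofic equivalence relations.

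Let me break the argument into steps.

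\textbf{Step 1 (from sofic group to sofic action).} Let $G_1 \ast_H G_2$ be the amalgamated product, where $G_1, G_2$ are sofic and $H$ is amenable. I would first choose, for each factor, a free measure-preserving action that is sofic. Concretely, take the Bernoulli shift $\beta_i$ of $G_i$ on $X_i = \{0,1\}^{G_i}$; by the Elek--Lippner theorem (in Ozawa's form proved above) these generate sofic relations, and since Bernoulli shifts are free, Proposition \ref{equivalent definitions} gives that each $\beta_i$ is a sofic action.

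\textbf{Step 2 (extending to a common $H$-action and amalgamating).} To apply the amalgamation theorem for actions over the amenable group $H$, I need the two actions to restrict, along the inclusions $H \hookrightarrow G_i$, to a \emph{common} sofic action of $H$. Here I would use the generalized-Bernoulli-shift proposition immediately preceding the corollary: since $G_i$ acts freely on itself, the restriction of $\beta_i$ to $H$ is the generalized Bernoulli shift of $H$ on $\{0,1\}^{G_i}$, and $G_i$ decomposes as $H \times (H\backslash G_i)$ as an $H$-set, so this restriction is (conjugate to) a classical Bernoulli shift of the amenable group $H$. I would arrange the two restrictions to be isomorphic $H$-actions (enlarging the base spaces to a common model, e.g. $\{0,1\}^{H \times I}$ for a large index set $I$, does not disturb soficity). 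Then the amalgamated product of the sofic actions $\beta_1$ and $\beta_2$ over the amenable group $H$ is a free action of $G_1 \ast_H G_2$, which by the amalgamation-of-actions theorem is sofic. Applying Proposition \ref{equivalent definitions} once more, the equivalence relation it generates is sofic.

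\textbf{Step 3 (from sofic relation back to sofic group).} Finally I must descend from soficity of the action of $G = G_1 \ast_H G_2$ to soficity of the group $G$ itself. Since the amalgamated action is free and sofic, its crossed product embeds into some $\Pi_{k\to\omega}M_{n_k}$ carrying $L^\infty(X)$ into the diagonals and the group unitaries $u_g$ into $\Pi_{k\to\omega}P_{n_k}$; restricting the embedding to $L(G) \subset L^\infty(X)\rtimes G$ produces an embedding of $G$ into $\Pi_{k\to\omega}P_{n_k}$, which is precisely a sofic representation of the group, provided it is trace-preserving (i.e. $Tr(u_g)=0$ for $g\neq e$, which holds because the action is free and measure-preserving).

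\textbf{The main obstacle} I expect is Step 2: ensuring that the two sofic $G_i$-actions can genuinely be glued into a single action of the amalgam over $H$ in a way that the hypotheses of the amalgamation-of-actions theorem are literally satisfied. This requires the restrictions to $H$ to be identified as the \emph{same} $H$-action, not merely two separately sofic $H$-actions; the decomposition of each $G_i$ into $H$-orbits, together with the freedom to pad with extra Bernoulli coordinates (Observation \ref{differentdimensions} lets us match matrix dimensions), should make this identification possible, but it is the step that demands care. Everything else is either a direct citation of the Bernoulli-shift theorem, the action-amalgamation theorem, or the routine passage between sofic groups, sofic actions, and sofic relations via Proposition \ref{equivalent definitions}.
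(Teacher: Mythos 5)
Your overall strategy --- realize the amalgam through Bernoulli-type actions, glue the two sofic embeddings over the amenable subgroup, and read off a permutation representation of $G_1*_HG_2$ --- is the right one and is close in spirit to the paper's. But as written your Step 2 is circular within the logical structure of this paper: you invoke the amalgamation-of-actions theorem (Theorem \ref{amalgamatedactionproduct}), whereas that theorem is proved \emph{after} Corollary \ref{amalgamatedgroupproduct} and its proof explicitly cites the corollary. The reason is that gluing the sofic embeddings $\Theta_1,\Theta_2$ over $L^\infty(X)\rtimes H$ only yields a representation of the \emph{algebraic} crossed product of $G_1*_HG_2$; to pass to the von Neumann crossed product one must force $Tr(\Theta(u_g))=0$ for all nontrivial $g$, and the paper achieves this in Theorem \ref{amalgamatedactionproduct} by tensoring with a trace-zero sofic representation $\Lambda$ of $G_1*_HG_2$ --- i.e.\ by already knowing the amalgam is sofic. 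So you cannot use that theorem here without supplying the trace condition by other means.

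The paper's proof avoids this by a specific choice that also dissolves your Step-2 worry about matching the two $H$-restrictions: take the single space $X=\{0,1\}^{G_1*_HG_2}$, on which $G_1$ and $G_2$ act by generalized Bernoulli shifts (sofic by the proposition immediately preceding the corollary, since each $G_i$ acts freely on the index set $G_1*_HG_2$); these two actions literally coincide on $H$ by construction, with no padding or identification of base spaces needed. After conjugating by a permutation (Proposition \ref{linkamenable}, using amenability of $H$ and \cite{CFW}) so that $\Theta_1=\Theta_2$ on $L^\infty(X)\rtimes H$, one obtains a permutation representation $\Theta$ of $G_1*_HG_2$ which implements on $\Theta_i(L^\infty(X))$ the classical Bernoulli shift of the amalgam. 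Essential freeness of that shift on a diffuse abelian algebra is what gives faithfulness and the vanishing of the trace directly, with no appeal to prior soficity of the amalgam. If you replace the citation of Theorem \ref{amalgamatedactionproduct} by this freeness argument (and work on $\{0,1\}^{G_1*_HG_2}$ from the start), your proof becomes correct and essentially coincides with the paper's.
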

\begin{proof}
Let $G_1$, $G_2$ be two sofic groups with a common amenable
subgroup $H$. Let $X=\{0,1\}^{G_1*_HG_2}$ equipped with product
measure. Then $G_1$ and $G_2$ act on $X$ as generalized Bernoulli
shifts and this actions coincide on $H$. Using the above
proposition (and \ref{differentdimensions}) we can construct sofic
embeddings $\Theta_i:L^\infty(X)\rtimes
G_i\to\Pi_{k\to\omega}M_{n_k}$ for $i=1,2$. By proposition
(\ref{linkamenable}) we can assume $\Theta_1=\Theta_2$ on
$L^\infty(X)\rtimes H$ (here we use $H$ amenable and classic
result from \cite{CFW}). Note that now $\Theta_1$ acts on
$\Theta_i(L^\infty(X))$ by shifting with $G_1$ and $\Theta_2$ acts
on the same space by shifting with $G_2$. This will provide a
representation $\Theta$ of $G_1*_HG_2$ on
$\Pi_{k\to\omega}P_{n_k}$.  Also, $\Theta$ acts on
$\Theta_i(L^\infty(X))$ as a classic Bernoulli shift. This implies
$\Theta$ is faithful, so $G_1*_HG_2$ is sofic.
\end{proof}

\begin{cor}
Let $H$ be an abelian group and $G$ a sofic group. Then $H\wr G$
(wreath product) is sofic.
\end{cor}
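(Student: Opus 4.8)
The plan is to build an explicit sofic approximation of the group $H\wr G=H^{(G)}\rtimes G$ by combining a sofic approximation of $G$ with one of $H$, in the spirit of Ozawa's lamplighter description of the Bernoulli shift used above. Since $H$ is abelian it is amenable, hence sofic, so I may fix sofic embeddings $G\subset\Pi_{k\to\omega}P_{n_k}$ with representatives $p_{g;k}\in P_{n_k}$ permuting a set $Y_k$ of size $n_k$, and $H\subset\Pi_{k\to\omega}P_{m_k}$ with representatives $q_{h;k}\in P_{m_k}$ permuting a set $W_k$ of size $m_k$; in both cases I arrange $Tr(p_{g;k})\to_\omega 0$ for $g\neq e$ and $Tr(q_{h;k})\to_\omega 0$ for $h\neq e$, as in the sofic embeddings used for the Bernoulli theorem. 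Both approximations are indexed by the same $k\to\omega$, which is all that is needed.

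First I would put the approximation on the \emph{lamplighter} set $\Omega_k=Y_k\times W_k^{Y_k}$, whose points are pairs $(\xi,\eta)$ with $\xi\in Y_k$ the position of the lamplighter and $\eta:Y_k\to W_k$ a configuration of lamps; thus $|\Omega_k|=n_km_k^{n_k}=_{not}N_k$ and $\mathrm{Sym}(\Omega_k)=P_{N_k}$. I then define permutations of $\Omega_k$ on the generators: an element $g\in G$ moves the lamplighter, $(\xi,\eta)\mapsto(p_{g;k}(\xi),\eta)$; a lamp $h\in H$ placed at the identity acts only on the lamp under the lamplighter, $(\xi,\eta)\mapsto(\xi,\eta')$ with $\eta'(\xi)=q_{h;k}(\eta(\xi))$ and $\eta'=\eta$ elsewhere. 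These are genuine elements of $P_{N_k}$, so both $G$ and the lamp group map into $\Pi_{k\to\omega}P_{N_k}$; this is the whole point, and it is exactly what the bare Bernoulli embedding (where the lamp group lands in $\Pi_{k\to\omega}D_{n_k}$ as diagonal unitaries, not permutations) fails to provide. A short computation shows that the conjugate of the identity lamp by $g$ changes the single coordinate $p_{g;k}^{-1}(\xi)$, which realizes the copy of $H$ sitting at group position $g$.

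Next I would verify that the defining relations of $H\wr G$ hold in the ultraproduct, so that $\Theta=\Pi_{k\to\omega}\Theta_k$ is a homomorphism $H\wr G\to\Pi_{k\to\omega}P_{N_k}$. The relations inside $G$ and inside the single copy $H_e$ reduce to $p_{gg';k}\approx p_{g;k}p_{g';k}$ and $q_{hh';k}\approx q_{h;k}q_{h';k}$, which hold by soficity of $G$ and $H$. The essential relations are that two lamps sitting at distinct group positions commute. In the model the identity lamp and its $a$-conjugate (with $a\neq e$) act on the coordinates $\xi$ and $p_{a;k}^{-1}(\xi)$, and these coincide only on the fixed-point set $\{\xi:p_{a;k}(\xi)=\xi\}$, which by the normalization $Tr(p_{a;k})\to_\omega 0$ has density tending to $0$. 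Off a set of $(\xi,\eta)$ of vanishing measure the two permutations touch disjoint coordinates and commute. Here abelianness of $H$ enters: on the exceptional coinciding coordinate the two lamp operations are $q_{h;k}$ and $q_{h';k}$, which commute because $H$ is abelian, so the commutator is supported on a set of density $\to_\omega 0$ and hence vanishes in $\Pi_{k\to\omega}P_{N_k}$. I expect this collision control — and its absorption by commutativity of $H$ — to be the main obstacle, exactly as the estimate $\Pi_{k\to\omega}\chi_{T_k}=\Pi_{k\to\omega}\chi_{S_k}$ was the crux of the Bernoulli theorem.

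Finally I would compute traces to obtain a faithful, hence sofic, embedding. For a nontrivial element $((h_x),g)$ a fixed point $(\xi,\eta)$ of the associated permutation of $\Omega_k$ requires $p_{g;k}(\xi)=\xi$ together with $q_{h_x;k}$ fixing the relevant lamp values; when $g\neq e$ the first condition already forces the fixed-point density to tend to $0$, while when $g=e$ but some $h_x\neq e$ the lamp condition does, using $Tr(q_{h_x;k})\to_\omega 0$. Thus $Tr$ of every nontrivial element tends to $0$, so $\Theta:H\wr G\to\Pi_{k\to\omega}P_{N_k}$ is an injective, trace-preserving homomorphism into the permutations, and $H\wr G$ is sofic.
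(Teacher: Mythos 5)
Your construction is correct in its essentials, but it is a genuinely different route from the one in the paper. The paper never leaves the Bernoulli-shift framework: it takes the sofic embedding $\Theta$ of $L^\infty(\{0,1\}^G)\rtimes G$ already built in the Elek--Lippner/Ozawa theorem, a sofic embedding $\Lambda$ of $H$, and puts every lamp into the \emph{same} auxiliary factor via $\Phi(f_g^h)=c_g^0\otimes 1+c_g^1\otimes\Lambda(h)$, where $c_g^0,c_g^1$ are the cylinder projections. There the commutation of lamps at distinct positions reduces to $\Lambda(h_1)\Lambda(h_2)=\Lambda(h_2)\Lambda(h_1)$ on the overlap $c_{g_1}^1c_{g_2}^1$, which has trace $1/4$, so abelianness of $H$ is genuinely indispensable. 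You instead work on the full lamplighter phase space $Y_k\times W_k^{Y_k}$, where lamps at distinct group positions act on distinct coordinates of $\eta$ except on a collision set of $\xi$'s whose density is controlled by $Tr(p_{g_2g_1^{-1};k})\to_\omega 0$. This buys you a self-contained combinatorial argument that does not rely on the Bernoulli theorem (at the price of matrix size $n_km_k^{n_k}$ instead of the paper's $n_k2^{n_k}m_k$), and the skeleton --- check the relations of the presentation $\langle S|R\rangle$ in the ultraproduct, then check injectivity on the normal form $f_{g_1}^{h_1}\cdots f_{g_n}^{h_n}u_g$ --- is exactly the right one.

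One point deserves your attention: in your model the appeal to abelianness is logically redundant. You invoke it only on the coinciding coordinate, but the set of $(\xi,\eta)$ where the two lamp permutations collide already has density tending to $0$ along $\omega$, so the commutator vanishes in $\Pi_{k\to\omega}P_{N_k}$ whether or not $q_{h_1;k}$ and $q_{h_2;k}$ commute. As written, your argument therefore claims soficity of $H\wr G$ for an \emph{arbitrary} sofic group $H$, which is a substantially stronger theorem than the corollary (it was only established later, by Hayes and Sale). I could not locate a step that actually breaks for non-abelian $H$, but you should treat this as a warning sign and write out carefully the two estimates you currently compress into ``a short computation shows'': (i) that the relation $u_{g_1}f_{g_2}^hu_{g_1}^{-1}=f_{g_1g_2}^h$ holds up to a set of density controlled by $d(p_{g_1;k}p_{g_2;k},p_{g_1g_2;k})$, and (ii) that for a set $B\subset W_k$ of density $\vp_k$ the set $\{(\xi,\eta):\eta(\xi)\in B\}$ has density exactly $\vp_k$ in $\Omega_k$, which is what converts Hamming errors in $P_{m_k}$ into Hamming errors in $P_{N_k}$. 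If those go through as you expect, your proof is complete for the abelian case and in fact proves more.
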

\begin{proof}
We shall work with the following presentation $<S|R>$ of the
wreath product:
\begin{align*}
S=&\{f_g^h,u_g:\mbox{ for every } h\in H\mbox{ and }g\in G\};\\
R=&\{f_g^e=e:\forall g\in G\}\cup
\{f_g^{h_1}f_g^{h_2}=f_g^{h_1h_2}:\forall g\in G,\forall
h_1,h_2\in H\}\cup\\ &\{f_{g_1}^{h_1}f_{g_2}^{h_2}=
f_{g_2}^{h_2}f_{g_1}^{h_1}:\forall g_1,g_2\in G,g_1\neq g_2\forall
h_1,h_2\in H\}\cup\\ &\{u_{g_1}u_{g_2}=u_{g_1g_2}:\forall
g_1,g_2\in G\}\cup\\
&\{u_{g_1}f_{g_2}^hu_{g_1}^{-1}=f_{g_1g_2}^h:\forall g_1,g_2\in G,
h\in H\}.
\end{align*}

Consider first the case of $\zz_2\wr G$. Apply Elek-Lippner result
to embed $L(\zz_2^G)\rtimes_\beta G\simeq L(\zz_2^G\rtimes
G)=L(\zz_2\wr G)$ in some $\Pi_{k\to\omega}M_{n_k}$. Generators
$u_g$ will be ultraproduct of permutations. Instead, elements of
the type $f_g^h$ are unitaries in $\Pi_{k\to\omega}D_{n_k}$ with
$\pm 1$ entries. Construct a sofic representation of $\zz_2\wr G$
in $\Pi_{k\to\omega}P_{2n_k}$ by replacing a $1$ entry with $I_2$
and a $-1$ entry with:$\left(%
\begin{array}{cc}
  0 & 1 \\
  1 & 0 \\
\end{array}%
\right)$.

Consider now the general case. Let
$\Theta:L^\infty(\{0,1\}^G)\rtimes G\to\Pi_{k\to\omega} M_{n_k}$
the sofic embedding constructed in the last proof. Also let
$\Lambda:H\to P_{m_k}$ be a sofic embedding of $H$. We shall
construct $\Phi:H\wr G\to\Pi_{k\to\omega}P_{n_k}\otimes P_{m_k}$
as follows:
\begin{align*}
&\Phi(u_g)=\Theta(g)\otimes 1;\\
&\Phi(f_g^h)=c_g^0\otimes 1+c_g^1\otimes\Lambda(h).
\end{align*}
Relations in set $R$ are easy to check (one needs $H$ abelian for
$f_{g_1}^{h_1}f_{g_2}^{h_2}= f_{g_2}^{h_2}f_{g_1}^{h_1}$). Also
$Tr(\Phi(u_g))=0$ and $Tr(\Phi(f_g^h))=1/2$. In order to finish
the proof we need to see that $\Phi$ is injective.

The genereic element of $H\wr G$ is $s=f_{g_1}^{h_1}f_{g_2}^{h_2}
\ldots f_{g_n}^{h_n}u_g$ with $g_1,g_2\ldots g_n$ distinct. Then:
\[\Phi(s)=\left(\sum_{(i_1,\ldots,i_n)\in\{0,1\}^n} c_{g_1,g_2\ldots g_n}^{i_1,i_2\ldots
i_n}\otimes\Lambda(\Pi_{i_k=1}h_k)\right)(\Theta(g)\otimes 1).\]
Assume $\Phi(s)=1$. Then for any $(i_1,\ldots,i_n)\in\{0,1\}^n$,
$\Lambda(\Pi_{i_k=1}h_k)=1$. This force $h_k=e$ for any $k$. Then
$\Phi(u_g)=1$, so $u_g=e$. It follows $s=e$.
\end{proof}

\section{Sofic actions}

The goal would be to prove that every (free) action of a sofic
group is sofic. While this remains open we shall prove this fact
for a family of groups. Let's first solve this ambiguity: free or
general actions.

\begin{te}\label{freesoficactions}
Let $G$ be a group such that every free action is sofic. Then
every action of $G$ is sofic.
\end{te}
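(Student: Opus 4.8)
The plan is to reduce an arbitrary action to a free one by taking a diagonal product with a fixed free action, apply the hypothesis to the (now free) product, and then \emph{restrict} the resulting sofic embedding to the subalgebra coming from the original action. Concretely, let $\alpha$ be an arbitrary action of $G$ on $(X,\bb,\mu)$, and fix any free action $\gamma$ of $G$ on some standard probability space $(Y,\nu)$; for infinite $G$ a Bernoulli shift $\{0,1\}^G$ (or $[0,1]^G$) is free and does the job, while the translation action of $G$ on itself covers the remaining cases. Form the diagonal action $\alpha\times\gamma$ on $(X\times Y,\mu\times\nu)$. First I would check that $\alpha\times\gamma$ is free: if $g$ fixes a positive measure set of pairs $(x,y)$, then $\gamma(g)$ fixes a positive measure set of points $y$, forcing $g=e$ because $\gamma$ is free. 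Hence by hypothesis $\alpha\times\gamma$ is sofic, i.e. there is an embedding $\Theta:L^\infty(X\times Y)\rtimes_{\alpha\times\gamma}G\to\Pi_{k\to\omega}M_{n_k}$ with $\Theta(L^\infty(X\times Y))\subset\Pi_{k\to\omega}D_{n_k}$ and $\Theta(u_g)\in\Pi_{k\to\omega}P_{n_k}$ for every $g\in G$.

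Next I would identify the crossed product of the original action inside the large one. The inclusion $L^\infty(X)\hookrightarrow L^\infty(X\times Y)$ as functions of the first coordinate is $G$-equivariant and trace preserving, so on the dense $*$-subalgebra of finite sums $\sum_g a_g u_g$ with $a_g\in L^\infty(X)$ the canonical map into $L^\infty(X\times Y)\rtimes_{\alpha\times\gamma}G$ preserves the trace. Being trace preserving it is injective, hence extends to a normal embedding $L^\infty(X)\rtimes_\alpha G\hookrightarrow L^\infty(X\times Y)\rtimes_{\alpha\times\gamma}G$ whose image $N$ is the von Neumann algebra generated by $L^\infty(X)$ and the unitaries $u_g$. (Here one uses that $u_g(f\otimes 1)u_g^*=\alpha(g)(f)\otimes 1$ for $f\in L^\infty(X)$, so the pair really implements $\alpha$.)

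Finally, I would restrict $\Theta$ to this copy $N\cong L^\infty(X)\rtimes_\alpha G$. The restriction is still injective; it sends $L^\infty(X)\subset L^\infty(X\times Y)$ into $\Pi_{k\to\omega}D_{n_k}$, and it sends each $u_g$ into $\Pi_{k\to\omega}P_{n_k}$. These are exactly the two conditions in Definition \ref{sofic action}, so $\alpha$ is sofic. The substantive point is simply that soficity of an embedding is hereditary for a subalgebra of this shape: a subalgebra of $\Pi_{k\to\omega}D_{n_k}$ is again diagonal, and the very same permutations $\Theta(u_g)$ serve both crossed products, so no new verification is needed once the product is known to be sofic. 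I expect the only real obstacle to be the routine bookkeeping ensuring that a free action $\gamma$ is available for every $G$ and that the diagonal product stays essentially free; once freeness is secured, the hypothesis together with the restriction does all the work.
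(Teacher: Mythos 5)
Your proposal is correct and follows essentially the same route as the paper: form the diagonal product with a fixed free action (a Bernoulli shift), observe the product is free hence sofic by hypothesis, and restrict the resulting sofic embedding along the equivariant inclusion $L^\infty(X)\hookrightarrow L^\infty(X\times Y)$, noting that the diagonal and permutation conditions pass to the subalgebra. Your added checks (freeness of the diagonal product via Fubini, trace-preservation of the inclusion) are details the paper leaves implicit but are in the same spirit.
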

\begin{proof}
Let $\alpha$ be an action of $G$ on $X$. Let $\beta:G\to Aut(Y)$
be a free action (eg. Bernoulli shift). Define
$\alpha\otimes\beta:G\to Aut(X\times Y)$ by $g(x,y)=(gx,gy)$. With
this definition $\alpha\otimes\beta$ is a free action of $G$, so
it is sofic. We can embed $L^\infty(X\times
Y)\rtimes_{\alpha\otimes\beta}G$ in some $\Pi_{k\to\omega}M_{n_k}$
satisfying the requirements of sofic action. The space
$L^\infty(X)$ can be embedded in $L^\infty(X\times Y)$ by
$id\otimes 1$. This embedding can be extended to an embedding of
$L^\infty(X)\rtimes_\alpha G$ in $L^\infty(X\times
Y)\rtimes_{\alpha\otimes\beta}G$. This will prove $\alpha$ is
sofic.
\end{proof}

\begin{de}
Denote by $\mathcal{S}$ the class of groups for which every action
is sofic.
\end{de}

While we can not prove that every sofic group is in $\mathcal{S}$,
we will provide some examples. First goal is to deal with amenable
groups.

\begin{p}\label{zactions}
Each action of the integers admits a sofic embedding.
\end{p}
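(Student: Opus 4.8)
The plan is to reduce to free actions and then construct the embedding by hand from finite permutation models. An action of $\zz$ is nothing but a single measure-preserving transformation $T$ of $(X,\mu)$, and a sofic embedding must carry $L^\infty(X)$ into $\Pi_{k\to\omega}D_{n_k}$ and the unitary $u$ implementing $T$ into $\Pi_{k\to\omega}P_{n_k}$. By Theorem \ref{freesoficactions} it suffices to prove that every \emph{free} action of $\zz$ is sofic, so I may assume $T$ is aperiodic. (In the ergodic case one could instead argue abstractly: a free ergodic action of $\zz$ is orbit equivalent to the Bernoulli shift by Dye's theorem, which is already known to be sofic, whence Theorem \ref{orbit equivalent actions} applies. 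I prefer the hands-on route below since it handles the non-ergodic case uniformly.)

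The engine is Rokhlin's lemma: for aperiodic $T$, every height $N$ and every $\varepsilon>0$ yield a base $B$ with $B,TB,\ldots,T^{N-1}B$ disjoint and $\mu\big(\bigcup_{j<N}T^jB\big)>1-\varepsilon$. Fix also an increasing sequence of finite partitions $\mathcal{P}_1\subset\mathcal{P}_2\subset\cdots$ whose atoms generate $L^\infty(X)$. For fixed $N$, $\varepsilon$ and $\mathcal{P}=\mathcal{P}_m$, I would cut $B$ into columns $B_c$ indexed by the full $\mathcal{P}$-name $c=(\iota(x),\iota(Tx),\ldots,\iota(T^{N-1}x))$ of a point climbing the tower (here $\iota(x)$ records the atom of $x$), and replace each mass $\mu(B_c)$ by an integer multiplicity $m_c\approx M\mu(B_c)$. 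The finite model is then the set $Y=\{(c,t,j):1\le t\le m_c,\ 0\le j<N\}$ of cardinality $n=NM$, equipped with the uniform measure, with the projection at level $j$ reading off the atom $c_j$ (a genuine element of $D_n$), and with the permutation $p(c,t,j)=(c,t,j+1)$, cyclically $p(c,t,N-1)=(c,t,0)$. Thus $p\in P_n$ and the partition is represented by honest diagonal projections, so the two structural requirements of a sofic embedding hold exactly at every finite stage.

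Indexing $N_k\to\infty$, $\varepsilon_k\to0$ and $M_k\to\infty$ and setting $n_k=N_kM_k$, I would define $\Theta$ on generators by $\Theta(u)=\Pi_{k\to\omega}p_k$ and $\Theta(\chi_P)=\Pi_{k\to\omega}(\text{the level approximant of }P)$. The abelian and projection relations hold verbatim at each stage, while the covariance between $p_k$ and the partition projections fails only on the top level of the tower and on the uncovered $\varepsilon_k$-part; all such defects vanish at $k\to\omega$, so $\Theta$ is a well-defined $*$-homomorphism satisfying the diagonal and permutation conditions. The crux, and the step I expect to be the main obstacle, is showing that $\Theta$ is isometric, equivalently trace-preserving on the whole crossed product: this amounts to proving that the empirical distribution of $\mathcal{P}$-names in the finite models converges to the true distribution of $(X,T)$, so that each mixed moment $Tr\big(\Theta(a_0)\,\Theta(u)^{j_1}\,\Theta(a_1)\cdots\big)$ converges to the corresponding trace in $L^\infty(X)\rtimes_\alpha\zz$. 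Rokhlin's lemma (the tower exhausts all but $\varepsilon_k$ of $X$) together with the column bookkeeping is precisely what forces these frequencies to agree, while the cyclic wrap-around and the rounding of masses contribute errors of order $1/N_k$ and $1/M_k$ that disappear in the ultralimit. Finally a diagonal argument over the refining partitions, exactly as in Proposition \ref{linkabelian}, promotes the embedding from the dense subalgebra $\bigcup_m A_m$ to all of $L^\infty(X)$, completing the sofic embedding of $L^\infty(X)\rtimes_\alpha\zz$.
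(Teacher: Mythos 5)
Your proof is correct, but it takes a genuinely different route from the one in the paper. You reduce to free (aperiodic) actions via Theorem \ref{freesoficactions} and then build explicit finite permutation models from Rokhlin towers, cutting the base into columns according to $\mathcal{P}$-names and letting a long cycle act on each column; trace preservation then rests on convergence of empirical name distributions, and the condition $Tr(\Theta(f)\Theta(u)^m)=0$ for $m\neq0$ is automatic because your cycles have length $N_k\to\infty$, so $p_k^m$ is fixed-point free for $0<|m|<N_k$. The paper instead argues abstractly in two steps: it picks any embedding $\Theta$ of $L^\infty(X)$ into $\Pi_{k\to\omega}D_{n_k}$, applies Proposition \ref{linkabelian} to $\Theta$ and $\Theta\circ\alpha$ to produce a single permutation $u$ with $\mathrm{Ad}\,u\circ\Theta=\Theta\circ\alpha$ --- so the covariance relations hold exactly, with no Rokhlin lemma, no reduction to free actions, and no error estimates --- and then repairs the trace condition by tensoring with a trace-zero sofic representation of $\zz$, which is precisely the device your long cycles give you for free. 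Your construction is longer and requires the mixed-moment bookkeeping you rightly flag as the crux (it does go through by the standard tower estimates: all defects live on the top $O(L)$ levels and the uncovered $\varepsilon_k$-part, hence vanish in the ultralimit), but it is self-contained, produces concrete finite models in the spirit of the Elek--Lippner definition, and needs no auxiliary tensor factor; the paper's argument is shorter, treats non-free actions directly, and reuses the conjugacy lemma that is needed elsewhere in the paper anyway.
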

\begin{proof}
Let $\alpha:L^\infty(X)\to L^\infty(X)$ the automorphism that
generates the action. Choose $\Theta:L^\infty(X)\to
\Pi_{k\to\omega}D_{n_k}$ an embedding. Apply proposition
\ref{linkabelian} to $\Theta$ and $\Theta\circ\alpha$ to get a
unitary $u\in\Pi_{k\to\omega}P_{n_k}$ such that
$Adu\circ\Theta=\Theta\circ\alpha$.

As powers of permutation matrices are still permutation matrices,
we have $u^m\in\Pi_{k\to\omega}P_{n_k}$. Also
$u^m\Theta(f)(u^m)^*=\Theta(\alpha^m(f))$ for any $f\in
L^\infty(X)$. Now we have an embedding $\Theta$ of the algebraic
crossed product $L^\infty(X)\rtimes_\alpha\zz$. In order to have
an embedding of the crossed product we need the relation
$Tr(u^m)=0$ for any $m\in\zz^*$.

Let $\Lambda$ be an embedding of  $\zz$ in some
$\Pi_{k\to\omega}P_{r_k}$ using only elements of trace $0$. Define
the embedding $\Theta\otimes\Lambda$ of
$L^\infty(X)\rtimes_\alpha\zz$ in $\Pi_{k\to\omega}M_{n_k\cdot
r_k}$ by:
\begin{align*}
\Theta\otimes\Lambda(T)=&\Theta(T)\otimes 1&\mbox{ for }T\in
L^\infty(X) \\
\Theta\otimes\Lambda(u_g)=&\Theta(u_g)\otimes\Lambda(u_g)&\mbox{
for }g\in\zz.
\end{align*}
This embedding $\Theta\otimes\Lambda$ of the algebraic crossed
product respects the trace of the von Neumann crossed product.
Using the unique feature of the type II case the closure of its
imagine will be the crossed product.
\end{proof}

\begin{p}
Amenable groups are in $\mathcal{S}$.
\end{p}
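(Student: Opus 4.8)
The plan is to reduce to the case of a single transformation, where Proposition \ref{zactions} already does all the work, and to bridge the gap with the Connes--Feldman--Weiss theorem. By Theorem \ref{freesoficactions} it suffices to prove that every \emph{free} action $\alpha$ of an amenable group $G$ on $(X,\mu)$ is sofic; and by Proposition \ref{equivalent definitions} this is equivalent to showing that the orbit equivalence relation $E_\alpha$ is sofic, i.e. that $M(E_\alpha)$ admits a sofic embedding (Proposition \ref{soficembedding}). So the whole problem is pushed onto the equivalence relation, where the extra structure coming from amenability is available.

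Next I would use that $G$ amenable forces $E_\alpha$ to be an amenable equivalence relation, and hence hyperfinite by the theorem of Connes, Feldman and Weiss \cite{CFW}. The key structural input is then that a hyperfinite relation is generated by a single transformation: in the ergodic non-atomic case Dye's theorem produces a free $\zz$-action $\beta$ with $E_\beta=E_\alpha$. Now I would simply quote Proposition \ref{zactions}: the $\zz$-action $\beta$ admits a sofic embedding, so $M(E_\beta)$ admits a sofic embedding and $E_\beta$ is a sofic relation. Since $E_\beta=E_\alpha$ as equivalence relations we have $M(E_\beta)=M(E_\alpha)$, so $E_\alpha$ is sofic; equivalently, since $\alpha$ and $\beta$ are free and orbit equivalent, Theorem \ref{orbit equivalent actions} transports soficity from $\beta$ to $\alpha$. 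This settles all free actions, and Theorem \ref{freesoficactions} then upgrades the conclusion to every action of $G$, giving $G\in\mathcal{S}$.

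The main obstacle is not internal to the machinery of this paper but lies entirely in the two external structural facts: the passage from amenability of $G$ to hyperfiniteness of $E_\alpha$ (Connes--Feldman--Weiss) together with the realization of a hyperfinite relation by a single transformation (Dye/Krieger). Once this reduction is in place, soficity is inherited automatically from the integer case treated in Proposition \ref{zactions}, with no further analytic work. The point worth emphasizing is \emph{why} one cannot argue directly for a general amenable $G$: the construction in Proposition \ref{zactions} relies on taking powers $u^m$ of a single implementing permutation, which has no analogue for a multi-generator group, so the collapse to a single $\zz$-action is genuinely what makes the argument go through.

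Some care is needed away from the ergodic non-atomic situation. In the non-ergodic case I would decompose $E_\alpha$ into its ergodic components and run the argument fiberwise, or alternatively invoke Proposition \ref{linkamenable} to glue the embeddings of the pieces of the hyperfinite relation into a single sofic embedding; atoms and finite classes are handled directly. These are routine adaptations of the ergodic case and do not affect the core reduction.
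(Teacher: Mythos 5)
Your proposal is correct and follows essentially the same route as the paper: reduce to free actions via Theorem \ref{freesoficactions}, use Connes--Feldman--Weiss to realize $E_\alpha$ as the orbit relation of a single transformation, apply Proposition \ref{zactions} to that $\zz$-action, and transfer soficity back to $\alpha$ by orbit equivalence (Theorem \ref{orbit equivalent actions}). The paper's proof is exactly this, including the remark that the generating $\zz$-action is free because almost all classes are infinite.
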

\begin{proof}
Let $G$ be an amenable group and let $\alpha:G\to Aut(X,\mu)$ be a
free action. Then $E_\alpha$ is amenable. By \cite{CFW} $E_\alpha$
is generated by an action $\beta$ of $\zz$.  By previous
proposition $beta$ is sofic. Because almost all equivalence
classes of $E_\alpha$ are non-finite, $\beta$ is free. Using
proposition \ref{orbit equivalent actions} we deduce $\alpha$ is
sofic. Combined with theorem \ref{freesoficactions}, we get
$G\in\mathcal{S}$.
\end{proof}

Next proposition will enlarge the class of groups for which such
results hold.

\begin{te}\label{amalgamatedactionproduct}
Let $\alpha_1$ and $\alpha_2$ be two sofic actions of $G_1$ and
$G_2$ on the same space $X$. Consider $H$, a common amenable
subgroup of $G_1$ and $G_2$. Assume $\alpha_1$ and $\alpha_2$
coincide on $H$, and this action of $H$ is free. Then the action
$\alpha_1*_H\alpha_2$ of $G_1*_HG_2$ is sofic.
\end{te}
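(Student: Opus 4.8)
The plan is to realise the crossed product of the amalgamated action as an amalgamated free product and to build the sofic embedding by gluing sofic embeddings of the two factors along their common amenable corner. Throughout write $M_i=L^\infty(X)\rtimes_{\alpha_i}G_i$.

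First I would, for $i=1,2$, fix sofic embeddings $\Theta_i\colon M_i\to\Pi_{k\to\omega}M_{n_k}$ coming from soficity of $\alpha_i$, and use Observation \ref{differentdimensions} to arrange that both land in the same $\Pi_{k\to\omega}M_{n_k}$ with a large multiplicity, tensoring each $\Theta_i$ by $1_{r_k}$ with $r_k\to\infty$; the room this creates is used at the end. Set $N=L^\infty(X)\rtimes_{\alpha|_H}H$, a common subalgebra of $M_1$ and $M_2$, since $\alpha_1$ and $\alpha_2$ agree on $H$. Because $H$ is amenable and the $H$-action is free, $E_{\alpha|_H}$ is amenable, hence hyperfinite by \cite{CFW}, so $N=M(E_{\alpha|_H})$ is hyperfinite. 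The restrictions $\Theta_i|_N$ are sofic embeddings of $N$: each $u_\theta$, $\theta\in[E_{\alpha|_H}]$, is of the form $\sum_h\chi_{A_h}u_h$ for a partition $\{A_h\}$ of $X$, so $\Theta_i(u_\theta)$ is a sum of pieces of permutations and hence a permutation by Lemma \ref{permutations}. Thus Proposition \ref{linkamenable} supplies a permutation conjugating $\Theta_2|_N$ to $\Theta_1|_N$, and after replacing $\Theta_2$ by this conjugate we may assume $\Theta_1=\Theta_2$ on $N$.

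Next I would glue. Since $\Theta_1$ and $\Theta_2$ agree on $L^\infty(X)$ and on $\{u_h:h\in H\}$, the universal property of $G_1*_HG_2$ produces a well defined $*$-homomorphism $\Theta$ of the algebraic crossed product $L^\infty(X)\rtimes_{\alpha_1*_H\alpha_2}(G_1*_HG_2)$ by $\Theta|_{L^\infty(X)}=\Theta_1|_{L^\infty(X)}$, $\Theta(u_g)=\Theta_1(u_g)$ for $g\in G_1$, and $\Theta(u_g)=\Theta_2(u_g)$ for $g\in G_2$. Every $\Theta(u_g)$ is a product of permutations, hence a permutation, and $\Theta(L^\infty(X))\subset\Pi_{k\to\omega}D_{n_k}$, so $\Theta$ has all the formal features of a sofic embedding; the only thing to check is that it is faithful, i.e. trace preserving. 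Writing $\Theta(u_g)=\Pi_{k\to\omega}p_{g;k}$ with $p_{g;k}$ a permutation and recalling that $\Theta(f)=\Theta_1(f)$ is diagonal, $Tr(\Theta(fu_g))$ is the asymptotic average of $f$ over the fixed point set of $p_{g;k}$, whereas in the crossed product $Tr(fu_g)=\delta_{g,e}\int f$. Hence $\Theta$ is trace preserving precisely when, for every reduced word $g\neq e$, one has $\lim_{k\to\omega}|\mathrm{Fix}(p_{g;k})|/n_k=0$. Equivalently, $\Theta_1(M_1)$ and $\Theta_2(M_2)$ must be free with amalgamation over $N$, under the standard identification of $L^\infty(X)\rtimes(G_1*_HG_2)$ with the amalgamated free product $M_1*_NM_2$.

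The hard part is this last freeness, and it is where the amenability of $H$ is really spent: agreement on $N$ alone does not force reduced words to move almost every point. I would use the freedom still available, namely that conjugating $\Theta_2$ by a permutation $w$ commuting with $\Theta(N)$ leaves $\Theta_1=\Theta_2$ on $N$ and keeps the embedding sofic, while rotating the second copy relative to the first. Using hyperfiniteness of $N$, write $N=\overline{\cup_m N_m}$ with $N_m$ finite dimensional; the large multiplicity arranged in the first step makes the relative commutant of each $\Theta(N_m)$ contain full matrix blocks, hence carry many permutations commuting with $\Theta(N)$. The plan is to choose $w$ by a random-permutation construction inside these relative commutants and to invoke a relative (amalgamated) version of the asymptotic freeness of random permutation matrices over the amenable subalgebra $N$, so as to annihilate the fixed points of all reduced words, finishing with a diagonal argument over the finitely many words and the approximants $N_m$. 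Controlling this randomization---driving the fixed point proportion of every reduced word to $0$ simultaneously while preserving the match on $N$---is the main obstacle; everything preceding it is bookkeeping resting on Proposition \ref{linkamenable} and Lemma \ref{permutations}.
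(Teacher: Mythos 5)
Your first two steps coincide with the paper's: use Observation \ref{differentdimensions} to place both sofic embeddings in the same ultraproduct, use Proposition \ref{linkamenable} (via hyperfiniteness of the $H$-orbit relation, which is where amenability and freeness of the $H$-action enter) to arrange $\Theta_1=\Theta_2$ on $L^\infty(X)\rtimes H$, and glue to a representation $\Theta$ of the algebraic crossed product by $G_1*_HG_2$ landing in diagonals and permutations. You also correctly diagnose that the only missing property is trace preservation, i.e. $Tr(\Theta(fu_g))=0$ for reduced $g\neq e$. But your plan for securing this --- randomly rotating $\Theta_2$ by permutations commuting with $\Theta(N)$ and invoking a ``relative asymptotic freeness of random permutations over the amenable subalgebra $N$'' --- is a genuine gap, not bookkeeping: such a statement is essentially the Collins--Dykema / Elek--Szabo theorem on amalgamated free products of sofic representations over amenable subgroups, a substantial result in its own right, and you give no argument for it. As written, your proof stops exactly at the point you yourself flag as ``the main obstacle.''

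The paper sidesteps this obstacle entirely with the tensoring trick of Proposition \ref{zactions}: one does not need the glued representation $\Theta$ itself to kill the traces of reduced words, only \emph{some} sofic embedding that does. Since $G_1$ and $G_2$ are sofic (because $L(G_i)\subset L^\infty(X)\rtimes G_i$) and $H$ is amenable, Corollary \ref{amalgamatedgroupproduct} provides a sofic representation $\Lambda$ of $G_1*_HG_2$ in some $\Pi_{k\to\omega}P_{r_k}$ with $Tr(\Lambda(g))=0$ for all $g\neq e$. Setting $(\Theta\otimes\Lambda)(f)=\Theta(f)\otimes 1$ and $(\Theta\otimes\Lambda)(u_g)=\Theta(u_g)\otimes\Lambda(u_g)$ yields a representation of the algebraic crossed product whose trace on $\sum_g f_gu_g$ equals $\sum_g Tr(\Theta(f_gu_g))\,Tr(\Lambda(u_g))=\int f_e\,d\mu$, the correct crossed-product trace, while still sending $L^\infty(X)$ into diagonals and each $u_g$ to a permutation; the weak closure is then the von Neumann crossed product. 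This one-line modification replaces the entire freeness construction you were attempting, and your write-up becomes complete if you substitute it for your final paragraph.
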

\begin{proof}
Using \ref{differentdimensions} we can construct sofic embeddings
of the two crossed products in the same ultraproduct. So let
$\Theta_i:L^\infty(X)\rtimes G_i\to\Pi_{k\to\omega}M_{n_k}$,
$i=1,2$. By \ref{linkamenable} we can assume $\Theta_1=\Theta_2$
on $L^\infty(X)\rtimes H$ (using the freeness of this action). Now
we can construct a representation $\Theta$ of the algebraic
crossed product $L^\infty(X)\rtimes(G_1*_HG_2)$ on
$\Pi_{k\to\omega}M_{n_k}$. In order to embed the von Neumann
crossed product the trace of each nontrivial $u_g$, $g\in
G_1*_HG_2$ must be equal to $0$. Because $L(G)\subset
L^\infty(X)\rtimes G$ we know that $G_1$ and $G_2$ are sofic. Then
$G_1*_HG_2$ is sofic (see \ref{amalgamatedgroupproduct}). There
exist an embedding $\Lambda$ of $G_1*_HG_2$ in some
$\Pi_{k\to\omega}P_{r_k}$ using only elements of trace $0$. Define
the embedding $\Theta\otimes\Lambda$ of
$L^\infty(X)\rtimes_{\alpha_1*_H\alpha_2}G_1*_HG_2$ like in
\ref{zactions}.
\end{proof}

Adapting the same methods we can prove this result for a countable
family of actions.

\begin{p}\label{countable product actions}
Let $\{\alpha_i\}_{i\in\nz}$  be a family of sofic actions of
$\{G_i\}_{i\in\nz}$ on the same space. Assume $H$ is an amenable
common subgroup of $G_i$ and the actions $\alpha_i$ coincide on
$H$. Then $*_H\alpha_i$ is sofic.
\end{p}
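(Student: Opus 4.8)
The plan is to run the proof of Theorem \ref{amalgamatedactionproduct} essentially verbatim, the only new ingredient being a diagonal argument that aligns all countably many embeddings simultaneously rather than one pair at a time. First I would use soficity of each $\alpha_i$ to pick sofic embeddings $\Theta_i$ of $L^\infty(X)\rtimes G_i$, and regard them (via Observation \ref{differentdimensions}) as embeddings into one ultraproduct $\Pi_{k\to\omega}M_{n_k}$. Since $H$ is amenable and its action is free, the relation it generates is hyperfinite, so $L^\infty(X)\rtimes H$ is a hyperfinite algebra with Cartan subalgebra $L^\infty(X)$, and each $\Theta_i$ restricts to a sofic embedding of it. Fixing $\Theta_1$ as reference, for each $i$ I would apply Proposition \ref{linkamenable} to the pair $(\Theta_1,\Theta_i)$ on $L^\infty(X)\rtimes H$ to get a permutation conjugating $\Theta_i$ into agreement with $\Theta_1$ there. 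Conjugation by a permutation carries diagonals to diagonals and permutations to permutations, hence keeps the embedding sofic, so after replacing each $\Theta_i$ we may assume all of them coincide on $L^\infty(X)\rtimes H$.

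Once the $\Theta_i$ agree on the amalgamated part, the universal property of the amalgamated free product yields a representation $\Theta$ of the algebraic crossed product $L^\infty(X)\rtimes(*_HG_i)$ determined by $\Theta|_{L^\infty(X)}=\Theta_1|_{L^\infty(X)}$ and $\Theta(u_g)=\Theta_i(u_g)$ for $g\in G_i$; well-definedness is precisely the agreement on $H$, and the covariance relations hold because each $\Theta_i$ is covariant for $\alpha_i$ and the actions match on $H$. To pass from the algebraic to the von Neumann crossed product I would correct the traces as in Proposition \ref{zactions} and Theorem \ref{amalgamatedactionproduct}: the group $*_HG_i$ is the directed union of the finite amalgams $G_1*_H\cdots*_HG_n$, each sofic by Corollary \ref{amalgamatedgroupproduct}, so $*_HG_i$ is itself sofic. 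Choosing a sofic embedding $\Lambda$ of $*_HG_i$ using only trace-$0$ permutations and forming $\Theta\otimes\Lambda$ makes every nontrivial $u_g$ have trace $0$, so $\Theta\otimes\Lambda$ respects the trace of the von Neumann crossed product and its weak closure is the desired sofic embedding.

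The step where countability genuinely bites, and the one I expect to be the main obstacle, is realizing all of this inside a single $\Pi_{k\to\omega}M_{n_k}$ with one matrix dimension at each level $k$: for two factors a single use of Observation \ref{differentdimensions} suffices, but one cannot tensor infinitely many dimension-adjustments at once, nor glue the finite amalgams $K_n=G_1*_H\cdots*_HG_n$ directly since they are nonamenable and their sofic embeddings are not unique. Instead I would diagonalize over the ultrafilter exactly in the style of Propositions \ref{linkabelian} and \ref{linkamenable}: writing $\Theta_i=\Pi_{k\to\omega}\Theta_{i,k}$ and exhausting by finite subfamilies $\{1,\dots,n\}$ and by finite-dimensional subalgebras, I would inductively pick shrinking sets $F_n\in\omega$ on which the first $n$ conjugated embeddings match $\Theta_1$ on a fixed finite set of generators up to $1/n$, and assemble the final components from $F_n\setminus F_{n+1}$. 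Checking that this diagonalization produces honest permutation conjugators and preserves the sofic conditions for all $i$ at once is the delicate bookkeeping; everything else is identical to the two-factor argument.
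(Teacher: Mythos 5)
Your proposal is correct and follows exactly the route the paper intends: the paper offers no proof beyond the remark that one ``adapts the same methods'' as Theorem \ref{amalgamatedactionproduct}, and your elaboration (put all embeddings into one ultraproduct, align every $\Theta_i$ with $\Theta_1$ on $L^\infty(X)\rtimes H$ via Proposition \ref{linkamenable}, glue by the universal property of $*_HG_i$, and fix traces by tensoring with a trace-zero sofic embedding of the sofic group $*_HG_i$) is precisely that adaptation. My only remark is that your final diagonalization is heavier than necessary: the dimension adjustment can be done directly by setting $N_k=n_k^{(1)}\cdots n_k^{(k)}$ and tensoring $\Theta_{i,k}$ with the identity on the complementary factor for all $k\geq i$ (a set belonging to $\omega$ for each fixed $i$), after which Proposition \ref{linkamenable} is applied to each pair $(\Theta_1,\Theta_i)$ independently, so the countably many conjugating permutations never need to be synchronized with one another.
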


\begin{cor}\label{actions of finfty}
Each action of a free group, including $\ff_\infty$ is sofic.
\end{cor}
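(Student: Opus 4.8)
The plan is to exhibit any free group as a free product of copies of $\zz$ over the trivial subgroup and then invoke the results already established. First I would reduce to free actions: by Theorem \ref{freesoficactions} it is enough to show that every \emph{free} action of a free group is sofic, for then every action is sofic and the free group lies in $\mathcal{S}$. So fix a free action $\alpha$ of $\ff_\infty$ on a space $X$.

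Next I would use the canonical decomposition $\ff_\infty=*_{i\in\nz}\zz$, a free product of countably many copies of the integers amalgamated over the trivial subgroup $H=\{e\}$. Restricting $\alpha$ to the $i$-th copy of $\zz$ gives an action $\alpha_i$ of $\zz$ on the same space $X$; by Proposition \ref{zactions} each $\alpha_i$ admits a sofic embedding and is therefore sofic. The trivial group is amenable and the $\alpha_i$ coincide on it vacuously, so all hypotheses of Proposition \ref{countable product actions} are satisfied.

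Proposition \ref{countable product actions} then yields that $*_H\alpha_i$ is sofic. Here $*_H G_i=*_{\{e\}}\zz=\ff_\infty$, and the free-product action $*_H\alpha_i$ is, by the universal property of free products, exactly the action determined by the restrictions $\alpha_i$; it therefore coincides with the original $\alpha$. Hence $\alpha$ is sofic. For a free group $\ff_n$ of finite rank the identical argument applies, using either Proposition \ref{countable product actions} with finitely many factors or Theorem \ref{amalgamatedactionproduct} iterated $n-1$ times.

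The argument is short because the substantive work sits in the earlier propositions; the one point deserving explicit verification is that taking the amalgamating subgroup $H$ to be trivial is legitimate in Proposition \ref{countable product actions}. This is where I expect the only friction: one must note that for $H=\{e\}$ the equivalence relation it generates is the diagonal, so the linking step (Proposition \ref{linkamenable}, or directly Proposition \ref{linkabelian}) goes through, the required coincidence of the embeddings on $L^\infty(X)\rtimes H=L^\infty(X)$ is available, and the freeness condition on the $H$-action is vacuous.
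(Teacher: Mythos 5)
Your proposal is correct and is essentially the paper's own argument: the paper proves this corollary by combining Proposition \ref{zactions} with Proposition \ref{countable product actions}, viewing $\ff_\infty$ as a free product of copies of $\zz$ amalgamated over the trivial (amenable) subgroup. The only difference is your preliminary reduction to free actions via Theorem \ref{freesoficactions}, which is harmless but unnecessary since Proposition \ref{zactions} already covers arbitrary actions of $\zz$.
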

\begin{proof}
Corollary of \ref{zactions} and \ref{countable product actions}.
\end{proof}

We now recover with our methods the result of Elek-Lippner that
any treeable equivalence relation is sofic.

\begin{p}\label{treeable}
Every treeable equivalence relation is sofic.
\end{p}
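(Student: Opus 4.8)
The plan is to use the treeing to present $E$ as the orbit equivalence relation of a free action of $\ff_\infty$ assembled from amenable pieces, and then to invoke the soficity results already established for free products of actions over amenable subgroups. Fix a treeing of $E$, that is, a countable family $\{\phi_n\}_{n\in\nz}\subset[[E]]$ of partial isomorphisms which generates $E$ and whose associated graphing is a forest on almost every class. For each $n$ let $E_n\subset E$ be the subrelation generated by $\phi_n$. Since $E_n$ is generated by a single partial isomorphism it is hyperfinite, hence amenable, so by the Feldman--Moore construction together with \cite{CFW} it is generated by a single automorphism $\theta_n\in[E]$. By Proposition \ref{zactions} the resulting action $\alpha_n$ of $\zz$ on $X$ is sofic.

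Next I would assemble these into a single action of the free group. Let $\gamma$ be the action of $\ff_\infty=*_n\zz$ on $X$ whose $n$-th generator acts by $\theta_n$; this is automatically a well defined action because $\ff_\infty$ is free, and its orbit relation $E_\gamma$ is exactly the relation generated by $\{\theta_n\}$, which is $E$. Viewing $\gamma$ as the free product $*_H\alpha_n$ amalgamated over the trivial (hence amenable) subgroup $H=\{e\}$, Proposition \ref{countable product actions} applies and shows that $\gamma$ is sofic. Provided $\gamma$ is free, Proposition \ref{equivalent definitions} then yields that $E=E_\gamma$ is sofic, which is the claim. Equivalently, one may argue directly through Proposition \ref{soficembedding}, gluing sofic embeddings of the hyperfinite algebras $M(E_n)$ so that they coincide on the common Cartan $A$ by means of Proposition \ref{linkamenable}, and then running the Bernoulli/amalgamation argument of Theorem \ref{amalgamatedactionproduct} over the trivial subrelation $\Delta$.

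The main obstacle is exactly the freeness of $\gamma$, which is where the acyclicity of the treeing must be used and where care is required. Completing each $\phi_n$ to a full automorphism $\theta_n$ of $X$ can introduce dynamics off the support of the $n$-th edge, and finite $\theta_n$-orbits or backtracking reduced words could in principle fix a set of positive measure, violating freeness. The point to establish is that the completions can be chosen compatibly with the forest structure so that every nontrivial reduced word in $\ff_\infty$ corresponds to a non-backtracking path in the treeing and hence moves almost every point; this is the translation of ``no cycles in the graphing'' into ``no relation is realized by the free group action''. Once freeness is secured, soficity is delivered by Proposition \ref{countable product actions} together with Proposition \ref{equivalent definitions} exactly as above, and the only remaining ingredient, the soficity of each amenable factor, is already furnished by Proposition \ref{zactions}.
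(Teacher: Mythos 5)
There is a genuine gap, and you have located it yourself without closing it: your route needs the assembled action $\gamma$ of $\ff_\infty$ to be \emph{free} (otherwise Proposition \ref{equivalent definitions} does not apply and $E_\gamma$ tells you nothing about $M(E)$), and the heuristic you offer for freeness does not work. Once you complete each partial isomorphism $\phi_n$ to a full automorphism $\theta_n\in[E]$, the graph of $\theta_n$ off the domain of $\phi_n$ consists of pairs of $E$ that are in general \emph{not} edges of the treeing; consequently a nontrivial reduced word in the $\theta_n$ does \emph{not} correspond to a non-backtracking path in the forest, and acyclicity of the treeing gives no control over its fixed-point set. Realizing a treeable relation as the orbit relation of a free action of a free group is a real theorem (in the spirit of Hjorth's work on cost), not a consequence of choosing the completions ``compatibly''; as written, the key step of your argument is an unproved assertion. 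Your alternative sketch (gluing sofic embeddings of the $M(E_n)$ over $A$) is also not carried out: Theorem \ref{amalgamatedactionproduct} is a statement about crossed products by group actions, not about amalgamated free products of Cartan pairs, so it cannot be invoked verbatim there.

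The paper avoids the freeness issue entirely, and this is the idea you are missing. It writes $\phi_i=a_i\lambda_i$ with $a_i$ a projection and $\lambda_i\in[E]$, forms the action $\alpha$ of $\ff_\infty$ with $\alpha(\gamma_i)=\lambda_i$ \emph{without requiring it to be free}, and uses Corollary \ref{actions of finfty} (every action of $\ff_\infty$ is sofic in the crossed-product sense) to get a sofic embedding of $L^\infty(X)\rtimes_\alpha\ff_\infty$. It then restricts this embedding to the von Neumann subalgebra generated by $L^\infty(X)$ and the pieces of permutations $a_iu_{\gamma_i}$; treeability is used only at this point, to identify that subalgebra with $M(E)$ (the trace computations match precisely because reduced words in the $\phi_i$ have no fixed points), and Lemma \ref{permutations} guarantees the restricted embedding is still sofic in the sense required by Proposition \ref{soficembedding}. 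In short: work with the definition of sofic equivalence relation through $M(E)$ and restriction of embeddings, rather than through orbit equivalence of a free action, and the obstacle you identified disappears.
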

\begin{proof}
Well, treeable is some kind of freeness and freeness in general
goes well with soficity.

Let $E$ be a treeable equivalence relation on $(X,\mu)$. Fix a
treeing of $E$, i.e. a countable set of partial Borel isomorphism
$\{\phi_i\}_{i\in\nz^*}\subset[[E]]$. For each $i$ we have
$\phi_i=a_i\lambda_i$, where $a_i$ is a projection in
$L^\infty(X)$ and $\lambda_i\in[E]$.

Define an action $\alpha$ of $\ff_\infty$ on $X$ such that
$\alpha(\gamma_i)=\lambda_i$ (where $\{\gamma_i\}_i$ are the
generators of $\ff_\infty$). Being an action of $\ff_\infty$,
$\alpha$ is sofic.

The von Neumann subalgebra of
$L^\infty(X)\rtimes_\alpha\ff_\infty$ generated by
$a_iu_{\gamma_i}$ is naturally isomorphic to $M(E)$. Hence every
sofic embedding of $L^\infty(X)\rtimes_\alpha\ff_\infty$ can be
restricted to a sofic embedding of $M(E)\subset
L^\infty(X)\rtimes_\alpha\ff_\infty$.
\end{proof}

We end this section with the following theorem.

\begin{te}
Class $\mathcal{S}$ is closed under amalgamated product over
amenable groups. It is strictly larger than the class of treeable
groups.
\end{te}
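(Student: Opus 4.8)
The plan is to treat the two assertions separately, deriving both from the machinery already assembled: that amenable groups and free groups lie in $\mathcal{S}$, that membership in $\mathcal{S}$ is detected on free actions (Theorem \ref{freesoficactions}), and that amalgamated products of sofic actions over amenable subgroups are sofic (Theorem \ref{amalgamatedactionproduct}).

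For the closure statement, let $G_1,G_2\in\mathcal{S}$ share a common amenable subgroup $H$, and set $G=G_1*_HG_2$. By Theorem \ref{freesoficactions} it suffices to show that every \emph{free} action $\alpha$ of $G$ on a space $X$ is sofic. First I would restrict $\alpha$ to the two factors, obtaining actions $\alpha_1=\alpha|_{G_1}$ and $\alpha_2=\alpha|_{G_2}$; since $G_1,G_2\in\mathcal{S}$, these restrictions are sofic. They agree on $H$ by construction, and the restriction of a free action to a subgroup is again free, so the $H$-action is free. The universal property of the amalgamated product identifies $\alpha$ with $\alpha_1*_H\alpha_2$, so Theorem \ref{amalgamatedactionproduct} applies and shows $\alpha$ is sofic. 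Hence every free action of $G$ is sofic and $G\in\mathcal{S}$.

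For the comparison with treeable groups I would argue containment and strictness in turn. For containment, a treeable group $G$ has the feature that its free actions produce treeable orbit equivalence relations; Proposition \ref{treeable} shows each such relation is sofic, and Proposition \ref{equivalent definitions} then makes the corresponding free action sofic. Feeding this into Theorem \ref{freesoficactions} gives $G\in\mathcal{S}$, so the class of treeable groups is contained in $\mathcal{S}$. For strictness I would exhibit a single group in $\mathcal{S}$ that is not treeable. A convenient witness is $F_2\times\mathbb{Z}$: writing $F_2=\langle a,b\rangle$ and $\mathbb{Z}=\langle t\rangle$ with $t$ central, the subgroups $\langle a,t\rangle$ and $\langle b,t\rangle$ are copies of $\mathbb{Z}^2$ meeting in $\langle t\rangle\cong\mathbb{Z}$, so $F_2\times\mathbb{Z}\cong\mathbb{Z}^2*_{\mathbb{Z}}\mathbb{Z}^2$. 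Both factors are amenable and the amalgamated subgroup is amenable, so the closure statement just proved puts $F_2\times\mathbb{Z}$ in $\mathcal{S}$; on the other hand $F_2\times\mathbb{Z}$ is not treeable, since a free action of it produces a non-amenable relation containing a commuting aperiodic hyperfinite subrelation (coming from the central $\mathbb{Z}$), which is incompatible with treeability.

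The main obstacle is the containment direction: controlling \emph{every} free action of a treeable group, rather than a single treeable one, is what forces the passage through Theorem \ref{freesoficactions}, and it implicitly rests on treeability being inherited by all free actions. The other delicate point is purely to certify the witness, namely that $F_2\times\mathbb{Z}$ genuinely fails to be treeable; this cannot be seen through $\ell^2$-Betti numbers or property (T) (all of which vanish for amalgams of amenable groups over amenable subgroups) and must instead be extracted from an indecomposability result for treeable relations.
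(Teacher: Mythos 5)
Your argument is correct and follows the paper's skeleton for the first two assertions: closure is exactly Theorem \ref{freesoficactions} combined with Theorem \ref{amalgamatedactionproduct} applied to the restrictions of a free $G_1*_HG_2$-action (which are indeed free, hence free on $H$), and containment of the treeable groups is \ref{treeable} plus \ref{equivalent definitions} fed into \ref{freesoficactions} --- with the same implicit reliance, which you at least flag explicitly and the paper does not, on every free action of a treeable group generating a treeable relation. Where you genuinely diverge is the witness for strictness. The paper takes $G=\zz*_{(2,3)\zz}\zz$ (two copies of $\zz$ amalgamated over $\zz$ embedded as $2\zz$ and $3\zz$), puts it in $\mathcal{S}$ via \ref{zactions} and \ref{amalgamatedactionproduct}, and rules out treeability by Gaboriau's cost formula: $\mathrm{cost}(G)=1+1-1=1$ while $G$ is non-amenable, and a non-amenable treeable group cannot have cost $1$. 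You take $F_2\times\zz\cong\zz^2*_{\zz}\zz^2$; the decomposition is correct, both factors and the amalgam are amenable so the closure statement applies, and non-amenability is immediate --- in these respects your witness is arguably more transparent than the paper's. The one soft spot is your justification of non-treeability via the ``commuting aperiodic hyperfinite subrelation'' coming from the central $\zz$: this does point at a true theorem (Adams' indecomposability of treed equivalence relations, equivalently the obstruction from a normal infinite amenable subrelation of infinite index), but it is a heavier and less precise citation than you acknowledge. Note that the very same cost computation the paper uses from \cite{Ga} applies verbatim to your witness ($\mathrm{cost}(\zz^2*_{\zz}\zz^2)=1+1-1=1$, and $F_2\times\zz$ is non-amenable), so you could close this step with the tools the paper already invokes rather than importing an indecomposability result.
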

\begin{proof}
First part of the theorem is \ref{amalgamatedactionproduct} and
\ref{freesoficactions}. By \ref{treeable} (and again
\ref{freesoficactions}) every treeable group is in $\mathcal{S}$.

Consider now the group $G=\zz*_{(2,3)\zz}\zz$. It is not treeable
but $G\in\mathcal{S}$. This example is from \cite{Ga}.

Relation $G\in\mathcal{S}$ is just \ref{zactions} and
\ref{amalgamatedactionproduct}. By general theory of Gaboriau, the
cost of $G$ is $1+1-1=1$. If amalgamation is done with good
morphism (multiplication by 2 and 3) then $G$ is not amenable.
This implies $G$ is not treeable.
\end{proof}

\section{Sofic equivalence relations}\label{Ser}

Now we shall present from \cite{El-Li} the original definition of
Elek and Lippner of soficity for actions and equivalence
relations.

\begin{de}
We call a \emph{basic sequence of projections} for $L^\infty(X)$ a
collection $\{e_{i,m}\}_{1\leq i\leq2^m,m\geq0}\subset
L^\infty(X)$ with following properties:
\begin{enumerate}
\item $\overline{span}^w\{e_{i,m}\}_{i,m}=L^\infty(X)$; \item
$\mu(e_{i,m})=2^{-m},1\leq i\leq 2^m,m\geq0$;\item
$e_{2i-1,m}+e_{2i,m}=e_{i,m-1},m\geq1$.
\end{enumerate}
\end{de}

Let $\ff_\infty=<\gamma_1,\gamma_2,\ldots>$. For any $r\in\nz$
denote by $W_r$ the subset of reduced words of length at most $r$
containing only the first $r$ generators and their inverses. We
have $W_0\subset W_1\subset\ldots$ and $\ff_\infty=\cup_{r\geq 0}
W_r$.

Let $\alpha:\ff_\infty\curvearrowright X$ a Borel action and fix
$\{e_{i,r}\}_{1\leq i\leq 2^r}$ a basic sequence of projections
for $L^\infty(X)$. The following definition will allow us to keep
track of the position of a point $x\in X$ relative to sets
$\{e_{i,r}\}_{1\leq i\leq 2^r}$ under the action of
$W_r\subset\ff_\infty$.

\begin{de}
Let $r\in\nz$. A \emph{r-labeled,\ r-neighborhood} is a finite
oriented multi-graph containing:
\begin{enumerate}
\item a root vertex such that any vertex is connected to the root
by a path of lenght at most $r$; \item ever vertex has a label
from the set $\{1,\ldots,2^r\}$; \item out-edges of every vertex
have different colors from the set
$\{\gamma_1,\gamma_1^{-1},\ldots,\gamma_r,\gamma_r^{-1}\}$; \item
if edge $xy$ is colored with $\gamma_i$ then $yx$ is colored by
$\gamma_i^{-1}$.
\end{enumerate}
Isomorphism classes of such objects form a finite set that we
shall denote by $U^{r,r}$.
\end{de}

For $G\in U^{r,r}$, denote by $R_G$ the root vertex in $G$. For
$\gamma\in W_r$ let $\gamma R_G$ be the vertex in $G$ obtained by
starting from $R_G$ and following the path given by $\gamma$ (if
such a path exists). Finally, let $l(\gamma R_G)$ be the label of
the vertex $\gamma R_G$ in the set $\{1,2,\ldots,2^r\}$.

Let $X$ be a space together with a basic sequence of projections.
For an action $\alpha:\ff_\infty\curvearrowright X$ and $x\in X$
we can define $B_r^r(x)\in U^{r,r}$ by taking the imagines of $x$
under $W_r$ and their labels with respect to $\{e_{i,r}\}_{1\leq
i\leq 2^r}$. For $G\in U^{r,r}$ let $T(\alpha,G)=\{x\in
X:B_r^r(x)\equiv G\}$. Define also $p_G(\alpha)=\mu(T(\alpha,G))$.

If $\alpha$ is an action on a finite space $Y$ (having the
normalized cardinal measure) we have the same definitions provided
that we still have some subsets $\{e_{i,r}\}_{1\leq i\leq
2^r,r\geq 0}$ of $Y$ satisfying the same summation relations. This
are needed to give labels to our vertices. Finite spaces with this
kind of partitions are called $X$-sets. We are now ready to give
the definition.

\begin{de}
An action $\alpha$ of $\ff_\infty$ is called \emph{sofic} (in
Elek-Lippner sense) if there exists a sequence of actions
$\alpha_k$ of $\ff_\infty$ on $X$-sets such that for any $r\geq
1$, for any $G\in U^{r,r}$ we have
$lim_{k\to\infty}p_G(\alpha_k)=p_G(\alpha)$.
\end{de}

\begin{de}
An equivalence relation is called \emph{sofic} if it is generated
by a sofic action of $\ff_\infty$ (all this in Elek-Lippner
sense).
\end{de}

For actions of $\ff_\infty$ the two notions of soficity are
different. With our definition every action of $\ff_\infty$ is
sofic (see \ref{actions of finfty}). Instead for equivalence
relations the two notions are the same. This is what we shall
prove now.

\begin{p}
Let $E\subset X^2$ be a sofic equivalence relation in sense of
Elek-Lippner. Then $E$ is also sofic ($M(E)$ admits a sofic
embedding in some $\Pi_{k\to\omega}M_{n_k}$).
\end{p}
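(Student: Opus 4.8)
The plan is to read a sofic embedding directly off the Elek--Lippner approximating sequence, using the convergence of neighborhood statistics only at one point: a trace computation. Since $E$ is generated by a sofic action $\alpha$ of $\ff_\infty$, fix a basic sequence of projections $\{e_{i,r}\}$ for $L^\infty(X)$ and let $\alpha_k$ be the approximating actions on $X$-sets $X_k$ with $n_k=Card(X_k)$; fix a free ultrafilter $\omega$ (the Elek--Lippner ordinary limit agrees with any such ultralimit). Each generator $\gamma_i$ acts on $X_k$ as a permutation, giving $p_{i,k}\in P_{n_k}$, and the level-$r$ labelling of $X_k$ gives diagonal projections $(e_{i,r})_k\in D_{n_k}$. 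Declaring $U_i\mapsto\Pi_{k\to\omega}p_{i,k}$ and $E_{i,r}\mapsto\Pi_{k\to\omega}(e_{i,r})_k$ defines a $*$-homomorphism $\Theta=\Pi_{k\to\omega}\Theta_k$ from the free $*$-algebra on these symbols into $\Pi_{k\to\omega}M_{n_k}$, sending the generators of $A$ into $\Pi_{k\to\omega}D_{n_k}$ and the $u_{\gamma_i}$ into $\Pi_{k\to\omega}P_{n_k}$. Because $\alpha$ generates $E$, the $*$-algebra $\mathcal{A}_0\subset M(E)$ generated by $\{e_{i,r}\}$ and $\{u_{\gamma_i}\}$ is weakly dense in $M(E)$ and is a quotient of this free algebra, so it suffices to show $\Theta$ factors through $\mathcal{A}_0$ as a trace-preserving map.

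The main obstacle, and the only place the soficity hypothesis enters, is to show that for every word $y$ in the free $*$-algebra one has $Tr_{\Pi M_{n_k}}(\Theta(y))=Tr_{M(E)}(q(y))$, where $q$ is the quotient map. Using $u_{\gamma_i}au_{\gamma_i}^*=\alpha_{\gamma_i}(a)$ and $u_wu_{w'}=u_{ww'}$, every word reduces to the form $a\cdot u_w$ with $a$ a product of translates $\alpha_{w_j}(e_{\cdot,r})$ of basic projections, and by property $3$ of the basic sequence all projections involved may be taken at a single level $r$. Its trace $\int_X a(x)\,\delta_x^{\alpha(w)(x)}\,d\mu$ counts, up to measure, the points $x$ whose labels at the vertices $w_jR$ take prescribed values and for which the path $w$ returns to the root. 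For $r$ larger than the total length of $y$ and the largest generator index occurring in $y$, all these conditions are read off the labelled neighborhood $B_r^r(x)$; hence $Tr_{M(E)}(q(y))=\sum_{G\in S}p_G(\alpha)$ for an explicit finite subset $S\subset U^{r,r}$ depending only on the combinatorics of $y$. The identical count performed inside $X_k$ gives $Tr_k(\Theta_k(y))=\sum_{G\in S}p_G(\alpha_k)$, and since $S$ is finite the hypothesis $\lim_{k\to\omega}p_G(\alpha_k)=p_G(\alpha)$ yields the claimed equality. The real work is the bookkeeping showing that the trace of an arbitrary word is captured by finitely many neighborhood types.

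With trace matching in hand, $\Theta$ annihilates every relation $R$ of $\mathcal{A}_0$: the word $R^*R$ satisfies $Tr(\Theta(R^*R))=Tr_{M(E)}(q(R^*R))=0$, and faithfulness of the trace on $\Pi_{k\to\omega}M_{n_k}$ forces $\Theta(R)=0$. Thus $\Theta$ descends to a trace-preserving $*$-homomorphism on $\mathcal{A}_0$, which extends by GNS continuity to a normal homomorphism $M(E)\to\Pi_{k\to\omega}M_{n_k}$, injective because $Tr$ is faithful.

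It remains to verify the two requirements of a sofic embedding. Since $\Theta$ is normal and the $\{e_{i,r}\}$ generate $A$ with images in the von Neumann subalgebra $\Pi_{k\to\omega}D_{n_k}$, we obtain $\Theta(A)\subset\Pi_{k\to\omega}D_{n_k}$. For $\theta\in[E]$, generation of $E$ by $\alpha$ yields a partition $\{p_w\}_{w\in\ff_\infty}\subset A$ with $\sum_wp_w=1$ and $u_\theta=\sum_wp_wu_w$; then $\Theta(u_\theta)=\sum_w\Theta(p_w)\Theta(u_w)$ is a unitary that is a sum of pieces of permutations, since each $\Theta(p_w)\in\Pi_{k\to\omega}D_{n_k}$ is a projection and each $\Theta(u_w)\in\Pi_{k\to\omega}P_{n_k}$. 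By Lemma \ref{permutations} it follows that $\Theta(u_\theta)\in\Pi_{k\to\omega}P_{n_k}$. Hence $\Theta$ is a sofic embedding and $E$ is sofic.
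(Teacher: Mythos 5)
Your proposal is correct and follows essentially the same route as the paper: the embedding is the ultraproduct of the finite approximating actions, and the only use of soficity is to match traces of words of the reduced form $a\cdot u_w$ against finite sums of the neighborhood statistics $p_G$, which is exactly the paper's verification of its three conditions (embedding of $L^\infty(X)$, permutation representation of $\ff_\infty$ acting compatibly, and the trace formula $Tr(f\Theta(\gamma))=\int_{X_\gamma}f\,d\mu$). A minor point in your favour: you make explicit, via the decomposition $u_\theta=\sum_w p_wu_w$ and Lemma \ref{permutations}, why arbitrary $\theta\in[E]$ land in $\unit(A)\cdot\Pi_{k\to\omega}P_{n_k}$, a step the paper leaves implicit.
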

\begin{proof}
Let $\alpha:\ff_\infty\curvearrowright(X,\mu)$ be a sofic action
in the sense of Elek-Lippner such that $E=E_\alpha$. Let
$\alpha_k$ a sequence of actions on X-sets $Y_k$ such that
$lim_{k\to\infty}p_G(\alpha_k)=p_G(\alpha)$. Finally let $n_k$ be
the cardinal of $Y_k$. We shall embed $M(E)$ in
$\Pi_{k\to\omega}M_{n_k}$ in a sofic way. For this we need:
\begin{enumerate}
\item an embedding $L^\infty(X)\subset\Pi_{k\to\omega}D_{n_k}$;
\item a representation $\Theta$ of $\ff_\infty$ on
$\Pi_{k\to\omega}P_{n_k}$; \item the formula
$Tr(f\Theta(\gamma))=\int_{X_\gamma}fd\mu$ for every $f\in
L^\infty(X)$ and $\gamma\in\ff_\infty$, where $X_{\gamma}=\{x\in
X:\gamma x=x\}$.
\end{enumerate}
By hypothesis $Y_k$ are $X$-sets, so they come together with
projections $\{e_{i,r}^k\}_{i,r}$. Construct
$e_{i,r}=\Pi_{k\to\omega}e_{i,r}^k$. We claim that
$\{e_{i,r}\}_{1\leq i\leq 2^r,r\geq 0}$ form a basic sequence of
projections for the algebra they generate. Relations
$e_{i,r}=e_{2i-1,r+1}+e_{2i,r+1}$ are automatic, we just need to
prove that $Tr(e_{i,r})=2^{-r}$.

Let $\{f_{i,r}\}_{1\leq i\leq 2^r,r\geq 0}\subset L^\infty(X)$ the
basic sequence of projections used in the construction of numbers
$p_G(\alpha)$. Fix $i$ and $r$. Let $U^{r,r}_i=\{G\in
U^{r,r}:l(R_G)=i\}$, i.e. graphs such that the root has label $i$.
Then $T(\alpha,G)\subset f_{i,r}$ for each $G\in U^{r,r}_i$.
Moreover: $f_{i,r}=\sqcup_{G\in U^{r,r}_i}T(\alpha,G)$. In the
same way we have $e_{i,r}^k=\sqcup_{G\in U^{r,r}_i}T(\alpha_k,G)$.
Because $\lim_{k\to\infty}p_G(\alpha_k)=p_G(\alpha)$ we have
$Tr(e_{i,r})=\lim_{k\to\omega}Tr(e_{i,r}^k)=Tr(f_{i,r})=2^{-r}$.

By identifying $e_{i,r}$ with $f_{i,r}$ we get an embedding of
$L^\infty(X)$. Now we construct the representation $\Theta$ of
$\ff_\infty$ in $\Pi_{k\to\omega}P_{n_k}$. We identified set $Y_k$
with diagonal $D_{n_k}$ and we have actions $\alpha_k$ of
$\ff_\infty$ that are defined on $Y_k$. This will construct a
representation. We need to make sure $\Theta$ acts the same way as
$\alpha$.

Let $\gamma$ be one of the generators of $\ff_\infty$. Fix $i,j$
and $r$. Let $U^{r,r}_{i,\gamma j}=\{G\in
U^{r,r}:l(R_G)=i,l(\gamma R_G)=j)\}$, the set of graphs such that
the root has label $i$ and the vertex connected with the root by
the $\gamma$ edge has label $j$ (the existence of such an edge is
a requirement we ask now for $G$). It is easy to see that
$f_{i,r}\cap\alpha(\gamma^{-1})(f_{j,r})=\sqcup_{G\in
U^{r,r}_{i,\gamma j}}T(\alpha,G)$. In the same way
$e_{i,r}^k\cap\alpha_k(\gamma^{-1})(e_{j,r}^k)=\sqcup_{G\in
U^{r,r}_{i,\gamma j}}T(\alpha_k,G)$. Using the hypothesis we get
$Tr(e_{i,r}\cdot\Theta(\gamma^{-1})(e_{j,r}))=\mu(f_{i,r}\cap\alpha(\gamma^{-1})(f_{j,r}))$.
This is enough to deduce that the action that $\Theta$ induce on
our embedding of $L^\infty(X)$ is equal to $\alpha$.

For the third requirement let now $\gamma\in\ff_\infty$ an
arbitrary element. It is of course sufficient to assume that $f$
is one of the projections $e_{i,r}$. We need to prove that
$Tr(e_{i,r}\Theta(\gamma))=\mu(X_\gamma\cap e_{i,r})$. Lets say
that in our construction we have
$\Theta(\gamma)=\Pi_{k\to\omega}\gamma_k$. Then
$Tr(e_{i,r}\Theta(\gamma))=lim_{k\to\infty}Tr(e_{i,r}^k\gamma_k)$.
Let $U^{r,r}_{i,\gamma}=\{G\in U^{r,r}:l(R_G)=i,\gamma R_G=R_G\}$,
i.e. the set of $G\in U^{r,r}$ such that the root has label $i$
and the path in $G$ described by $\gamma$, starting from the root,
returns to the root. Then $X_\gamma\cap e_{i,r}=\sqcup_{G\in
U^{r,r}_{i,\gamma}}T(\alpha,G)$. A similar formula with fixed
points of $\gamma_k$ and $e_{i,r}^k$ takes place. By
$lim_{k\to\infty}p_G(\alpha_k)=p_G(\alpha)$ we get
$Tr(e_{i,r}\Theta(\gamma))=\mu(X_\gamma\cap e_{i,r})$ and we are
done.
\end{proof}

\begin{p}
Let $E$ be a sofic equivalence relation ($M(E)$ embeds in some
$\Pi_{k\to\omega}M_{n_k}$). Then $E$ is also sofic in the sense of
Elek-Lippner.
\end{p}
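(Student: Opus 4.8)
The plan is to produce an explicit generating action of $\ff_\infty$ together with finite approximating actions read off from the sofic embedding. First I would fix a countable generating family in $[E]$ and realize it as a (not necessarily free) action $\alpha:\ff_\infty\curvearrowright X$ with $E=E_\alpha$, and fix a basic sequence of projections $\{e_{i,r}\}$ for $L^\infty(X)$. Let $\Theta:M(E)\to\Pi_{k\to\omega}M_{n_k}$ be a sofic embedding, so $\Theta(A)\subset\Pi_{k\to\omega}D_{n_k}$ and $\Theta(u_\gamma)\in\Pi_{k\to\omega}P_{n_k}$. Writing $\Theta(e_{i,r})=\Pi_{k\to\omega}e_{i,r}^k$, I would adjust the representatives $e_{i,r}^k\in D_{n_k}$ on $\omega$-negligible sets so that, for each fixed $k$, they satisfy the summation relations of a basic sequence exactly (the same representative-fixing device used at the start of the proof of Lemma \ref{permutations}); this equips $Y_k=\{1,\dots,n_k\}$ with an $X$-set structure. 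Choosing representatives $p_{\gamma_i}^k\in P_{n_k}$ of $\Theta(u_{\gamma_i})$ on the generators defines actions $\alpha_k:\ff_\infty\curvearrowright Y_k$, and since $\gamma\mapsto\Theta(u_\gamma)$ and $\gamma\mapsto\Pi_{k\to\omega}\alpha_k(\gamma)$ are homomorphisms agreeing on generators, one gets $\Theta(u_\gamma)=\Pi_{k\to\omega}p_\gamma^k$ with $p_\gamma^k:=\alpha_k(\gamma)$ for every word $\gamma$. The goal is then to prove $\lim_{k\to\omega}p_G(\alpha_k)=p_G(\alpha)$ for every $r$ and every $G\in U^{r,r}$.

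The crucial technical point --- and what I expect to be the main obstacle --- is to control the coincidence pattern of the orbit, i.e. to transport fixed-point projections correctly. Concretely I would prove that for each $g\in\ff_\infty$ one has $\Theta(\chi_g)=\Pi_{k\to\omega}q_g^k$, where $\chi_g\in A$ is the projection onto $X_g=\{x:\alpha(g)x=x\}$ and $q_g^k\in D_{n_k}$ is the fixed-point projection of the permutation $p_g^k$. The relation $\chi_g u_g=\chi_g$ (used already in the proof of Proposition \ref{soficembedding}) gives $\Theta(\chi_g)\Theta(u_g)=\Theta(\chi_g)$; writing $\Theta(\chi_g)=\Pi_{k\to\omega}s^k$ with $s^k$ the indicator of $S^k\subset Y_k$, a direct computation yields $\|s^kp_g^k-s^k\|_2^2=\tfrac{2}{n_k}\#\bigl(S^k\setminus\mathrm{Fix}(p_g^k)\bigr)$, which tends to $0$ along $\omega$, so $\Theta(\chi_g)\le\Pi_{k\to\omega}q_g^k$. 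On the other hand $Tr(\chi_g)=\mu(X_g)=Tr(u_g)$ directly from the trace on $M(E)$, and since $\Theta$ preserves the trace and $Tr(p_g^k)=\#\mathrm{Fix}(p_g^k)/n_k=Tr(q_g^k)$, the two projections have equal trace. In a finite tracial algebra an inequality of projections with equal trace is an equality, giving $\Theta(\chi_g)=\Pi_{k\to\omega}q_g^k$.

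Finally I would observe that the event $B_r^r(x)\equiv G$ is determined by two kinds of data: the labels, i.e. for each $\gamma\in W_r$ which $e_{j,r}$ contains $\alpha(\gamma)x$, recorded by the projections $u_\gamma^*e_{j,r}u_\gamma\in A$; and the coincidences, i.e. for $\gamma,\gamma'\in W_r$ whether $\alpha(\gamma)x=\alpha(\gamma')x$, recorded by the fixed-point projections $\chi_{\gamma^{-1}\gamma'}\in A$. Hence $\chi_{T(\alpha,G)}$ is a fixed finite Boolean combination (depending only on $G$ and $r$) of these commuting projections, and the very same Boolean expression, evaluated at the matrix-level projections $(p_\gamma^k)^*e_{j,r}^kp_\gamma^k$ and $q_{\gamma^{-1}\gamma'}^k$, is exactly $\chi_{T(\alpha_k,G)}$. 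Applying $\Theta$, using $\Theta(u_\gamma)=\Pi_{k\to\omega}p_\gamma^k$ together with the fixed-point identity of the previous step, and the fact that finite Boolean combinations commute with $\Pi_{k\to\omega}$, I obtain $\Theta(\chi_{T(\alpha,G)})=\Pi_{k\to\omega}\chi_{T(\alpha_k,G)}$. Taking traces and using that $\Theta$ is trace-preserving gives $p_G(\alpha)=Tr(\chi_{T(\alpha,G)})=\lim_{k\to\omega}Tr(\chi_{T(\alpha_k,G)})=\lim_{k\to\omega}p_G(\alpha_k)$ for all $r$ and $G$. Since the index set of pairs $(r,G)$ is countable and each $U^{r,r}$ is finite, a diagonal extraction replaces $\{\alpha_k\}$ by a genuine subsequence along which $p_G(\alpha_k)\to p_G(\alpha)$ in the ordinary sense for every $G$; this exhibits $\alpha$ as sofic in the Elek--Lippner sense, so $E=E_\alpha$ is Elek--Lippner sofic.
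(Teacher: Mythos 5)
Your argument is correct, and while it follows the paper's overall skeleton --- generate $E$ by an action $\alpha$ of $\ff_\infty$, read off finite actions $\alpha_k$ on $Y_k$ from level-$k$ representatives of the generators, label $Y_k$ by representatives of $\Theta(e_{i,r})$, and use the same three-condition characterization of $T(\alpha,G)$ (labels, coincidences, non-coincidences) --- the way you establish convergence of $p_G$ is genuinely different. The paper fixes a representative sequence $u_\gamma^k$ for \emph{each} word $\gamma$ separately and then runs a quantitative argument: it picks $\vp_1$ with $2|U^{r,r}|(|W_r|+2|W_r|^2)\vp_1<\vp$, finds a level $k$ at which several approximate inequalities hold (including a Hamming-distance comparison between $u_\gamma^k$ and the product of generator representatives), and bounds $\mu_{n_k}\bigl(T(\alpha,G)^k\,\triangle\, T(\alpha_k,G)\bigr)$ by counting; the coincidence pattern is encoded there via the traces $Tr(T(\alpha,G)u_\delta^*u_\gamma)\in\{0,\mu(T(\alpha,G))\}$. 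You instead take $p_\gamma^k$ to be multiplicatively defined, so that $\Theta(u_\gamma)=\Pi_{k\to\omega}p_\gamma^k$ holds exactly and the paper's comparison inequality becomes unnecessary, and you prove the exact ultraproduct identity $\Theta(\chi_{T(\alpha,G)})=\Pi_{k\to\omega}\chi_{T(\alpha_k,G)}$, from which convergence of the traces along $\omega$ is immediate. The one genuinely new ingredient you need --- the fixed-point identity $\Theta(\chi_g)=\Pi_{k\to\omega}q_g^k$ --- is proved correctly: the estimate $\|s^kp_g^k-s^k\|_2^2=\tfrac{2}{n_k}\#\bigl(S^k\setminus\mathrm{Fix}(p_g^k)\bigr)$ gives one inequality of projections, and the trace computation $Tr(\chi_g)=\mu(X_g)=Tr(u_g)$ upgrades it to equality; this is in the spirit of the $\chi_\varphi u_\varphi=\chi_\varphi$ manipulation in Proposition \ref{soficembedding} but does not appear in the paper's proof of the present statement. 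What your route buys is the elimination of all $\vp$-bookkeeping in favour of exact algebra in the ultraproduct; what the paper's route buys is that it never needs an exact fixed-point projection identity, only approximate trace statements, at the cost of a longer chain of estimates. Both still require the final diagonal extraction to convert the $\omega$-limit into an ordinary limit along a subsequence.
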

\begin{proof}
By \ref{soficembedding} we have a sofic embedding
$M(E)\subset\Pi_{k\to\omega}M_{n_k}$ such that
$L^\infty(X)=A\subset\Pi_{k\to\omega}D_{n_k}$ and
$u_\theta\subset\Pi_{k\to\omega}P_{n_k}$ for any $\theta\in[E]$.

We shall denote by $d$ the normalized Hamming distance on
$P_{n_k}$. In general $\gamma,\delta$ will denote elements in
$\ff_\infty$ and $\gamma_i,\delta_i$ will denote generators of
$\ff_\infty$. Let $\alpha:\ff_\infty\curvearrowright(X,\mu)$ an
action that generates the equivalence relation $E$ on $X$. For any
element $\gamma\in\ff_\infty$, $\alpha(\gamma)$ induce an element
$u_\gamma\in\nr(A)$ and $u_\gamma u_\delta=u_{\gamma\delta}$. We
shall write $u_\gamma=\Pi_{k\to\omega}
u_\gamma^k\in\Pi_{k\to\omega}P_{n_k}$.

Let $Y_k$ be a set with $n_k$ elements and identify algebra
$D_{n_k}$ with $L^\infty(Y_k)$. For any generator $\gamma_i$ of
$\ff_\infty$, $u_{\gamma_i}^k\in P_{n_k}$ induce an automorphism
of $Y_k$. Denote it by $\alpha_k(\gamma_i)$ and extend $\alpha_k$
by multiplicity to an action of $\ff_\infty$.

Let $\{e_{i,m}\}_{1\leq i\leq2^m,m\geq0}\subset L^\infty(X)$ a
basic sequence of projections. Use it in order to construct sets
$T(\alpha,G)$. Write $e_{i,m}=\Pi_{k\to\omega}e_{i,m}^k$ such that
$\{e_{i,m}^k\}_{i,m}$ respect the same summation relations. Now
elements in $Y_k$ are labelled by projections
$\{e_{i,m}^k\}_{i,m}$ so we have ingredients for constructing sets
$T(\alpha_k,G)$.

We need to show that out of this actions we can find a subsequence
satisfying the definition of soficity, namely that
$lim_{k\to\infty}\mu_{n_k}(T(\alpha_k,G))=\mu(T(\alpha,G))$ for
any $r\in\nz$ and any $G\in U^{r,r}$ (denote by $\mu_{n_k}$ the
normalized cardinal measure on a set with $n_k$ elements). The
subsequence is just to get rid of the ultrafilter and obtain
classical limit for the countable set of objects that we are
working with.

Fix $r\in\nz$ and $\vp>0$. Let us see that it is enough to find
$k\in\nz$ such that $|\mu_{n_k}(T(\alpha_k,G))-\mu(T(\alpha,G))|
<\vp$ for any $G\in U^{r,r}$. The phenomena here is that, if we
fix a $G\in U^{r,r}$, when we pass from step $r$ to $r+1$ we have
$T(\alpha,G)=\cup_{G'\in U^{r+1,r+1};G<G'}T(\alpha,G')$ (relation
$G<G'$ is defined in an obvious way). So $\mu(T(\alpha_{k'},G))$
is a sum of other $\mu(T(\alpha_{k'},G'))$, but a finite sum. When
we choose our sequence $\{\vp_r\}$ we have to make sure that it
compensates this growth.

Sets $\{T(\alpha,G):G\in U^{r,r}\}$ form a partition of $X$. Let
$T(\alpha,G)=\Pi_{k\to\omega}T(\alpha,G)^k$ such that
$\{T(\alpha,G)^k:G\in U^{r,r}\}$ is a partition of $Y_k$. We also
have in $D_{n_k}$ projections $T(\alpha_k,G)$. We know that
$\mu_{n_k}(T(\alpha,G)^k)\to_k\mu(T(\alpha,G))$ and we want to
show that $\mu_{n_k}(T(\alpha_k,G))\to_k\mu(T(\alpha,G))$ .

Now fix $G\in U^{r,r}$. We need to understand equations that
describe points in $T(\alpha,G)$. Remember that $R_G$ is the root
vertex in $G$; for $\gamma\in W_r$, $\gamma R_G$ is the vertex in
$G$ obtained by starting from $R_G$ and following the path given
by $\gamma$ (if such a path exists). Finally, $l(\gamma R_G)$ is
the label of the vertex $\gamma R_G$ in the set
$\{1,2,\ldots,2^r\}$. We can now state our characterization of
$T(\alpha,G)$.

A point $x\in X$ is an element of the set $T(\alpha,G)$ iff:
\begin{enumerate}
\item $\alpha(\gamma)(x)\in e_{l(\gamma R_G),r}$ for any
$\gamma\in W_r$ for which $\gamma R_G$ exists; \item
$\alpha(\gamma)(x)=\alpha(\delta)(x)$ $\forall \gamma,\delta\in
W_r,\ \gamma R_G=\delta R_G$; \item
$\alpha(\gamma)(x)\neq\alpha(\delta)(x)$ $\forall \gamma,\delta\in
W_r,\ \gamma R_G\neq\delta R_G$.
\end{enumerate}

First condition gives the coloring of vertices. The other two give
the structure of the graph $G$. Let $\vp_1>0$ such that
$2|U^{r,r}|(|W_r|+2|W_r|^2)\vp_1<\vp$. We want to find $k\in\nz$
such that for any $G\in U^{r,r}$ we have:
\begin{align}
 &\mu_{n_k}\left( \alpha_k(\gamma)(T(\alpha,G)^k)\setminus
e_{l(\gamma R_G),r}^k\right)<\vp_1 &\forall \gamma\in W_r;\label{e1}\\
 &\mu_{n_k}\left(T(\alpha,G)^k\setminus \{y\in Y_k:
\alpha_k(\gamma)(y)= \alpha_k(\delta)(y)\}\right) <\vp_1
&\forall \gamma,\delta\in W_r,\ \gamma R_G=\delta R_G\label{e2}\\
&\mu_{n_k}\left(T(\alpha,G)^k\setminus \{y\in Y_k:
\alpha_k(\gamma)(y)\neq\alpha_k(\delta)(y)\}\right) <\vp_1
&\forall \gamma,\delta\in W_r,\ \gamma R_G\neq\delta R_G\label{e4}\\
&|\mu_{n_k}(T(\alpha,G)^k)-\mu(T(\alpha,G))| <\vp/2\label{e3}.
\end{align}

First we shall prove that this four conditions are enough to
guarantee $|\mu_{n_k}(T(\alpha_k,G))-\mu(T(\alpha,G))| <\vp$ for
every $G\in U^{r,r}$. Using (\ref{e3}) we just need to prove
$|\mu_{n_k}(T(\alpha_k,G))-\mu_{n_k}(T(\alpha,G)^k)| <\vp/2$.

Take $x\in (T(\alpha,G)^k\setminus T(\alpha_k,G))$ for some $G\in
U^{r,r}$. Following our characterization of $T(\alpha,G)$, we
have:
\begin{enumerate}
\item $\exists\gamma\in W_r$ such that $\alpha_k(\gamma)(x)$ does
not have the right label, namely $l(\gamma R_G)$; \item or
$\exists\gamma,\delta\in W_r$ such that $\gamma R_G=\delta R_G$
and $\alpha_k(\gamma)(x)\neq \alpha_k(\delta)(x)$; \item or
$\exists\gamma,\delta\in W_r$ such that $\gamma R_G\neq\delta R_G$
and $\alpha_k(\gamma)(x)= \alpha_k(\delta)(x)$.
\end{enumerate}
Using (\ref{e1}), (\ref{e2}) and (\ref{e4}) we get:
\begin{align*}
\mu_{n_k}(T(\alpha,G)^k\setminus
T(\alpha_k,G))<|W_r|\vp_1+2|W_r|^2\vp_1.
\end{align*}
Because both $\{T(\alpha_k,G)\}_G$ and $\{T(\alpha,G)^k)\}_G$ are
partitions of $Y_k$ and the above formula holds for any $G\in
U^{r,r}$ we have: \[|\mu_{n_k}(T(\alpha_k,G))-
\mu_{n_k}(T(\alpha,G)^k)|<|U^{r,r}|(|W_r|\vp_1+2|W_r|^2\vp_1)<\vp/2.\]

Now back to the choice of $k$. Let $\gamma=\gamma_{i_1}
\gamma_{i_2}\ldots\gamma_{i_s}\in W_r$. We should in fact take
$\gamma=\gamma_{i_1}^{\zeta_1}\gamma_{i_2}^{\zeta_2}\ldots
\gamma_{i_s}^{\zeta_s}$, where $\zeta_j\in\{\pm1\}$. The inverses
will change nothing in our arguments and will only overload our
notations. Due to Feldman-Moore construction we know that
$u_\gamma=u_{\gamma_{i_1}}u_{\gamma_{i_2}}\ldots
u_{\gamma_{i_s}}$. Next, combine
$\alpha(\gamma)(T(\alpha,G))\subset e_{l(\gamma R),r}$ and
$\alpha(\gamma)(T(\alpha,G))=u_\gamma T(\alpha,G)u_\gamma^*$ to
get $u_\gamma T(\alpha,G)u_\gamma^*\subset e_{l(\gamma R),r}$.

Consider now $\gamma,\delta\in W_r$. If $\gamma R_G=\delta R_G$
then $\alpha(\gamma)|_{T(\alpha,G)}=
\alpha(\delta)|_{T(\alpha,G)}$ so
$Tr(T(\alpha,G)u_\delta^*u_\gamma)= \mu(T(\alpha,G))$ (here we
consider $T(\alpha,G)$ to be a projection of
$L^\infty(X)\subset\Pi_{k\to\omega}M_{n_k}$). If $\gamma
R_G\neq\delta R_G$ then $Tr(T(\alpha,G)u_\delta^* u_\gamma)=0$.
Find $k\in\nz$ such that (\ref{e3}) holds and:
\begin{align}
&||u_\gamma^k-u_{\gamma_{i_1}}^k u_{\gamma_{i_2}}^k\ldots
u_{\gamma_{i_s}}^k||_2<\vp_1/4&\forall \gamma=\gamma_{i_1}
\gamma_{i_2}\ldots\gamma_{i_s}\in W_r\label{in3}\\
&\mu_{n_k}\left(u_{\gamma}^k T(\alpha,G)^ku_{\gamma}^{k*}\setminus
e_{l(\gamma R_G),r}^k\right)<\vp_1/2&\forall \gamma\in W_r,
\forall
G\in\ U^{r,r}\label{in4}\\
&\mu_{n_k}(T(\alpha,G)^k)-Tr\left(T(\alpha,G)^ku_\delta^{k*}u_\gamma^k\right)<
\vp_1/2 &\forall G\in U^{r,r}\forall\gamma,\delta\in W_r,\gamma
R_G= \delta R_G\label{in5}\\
&Tr\left(T(\alpha,G)^ku_\delta^{k*}u_\gamma^k\right)< \vp_1/2
&\forall G\in U^{r,r}\forall\gamma,\delta\in W_r,\gamma R_G\neq
\delta R_G\label{in8}
\end{align}

By definition $\alpha_k(\gamma)=\alpha_k(\gamma_{i_1})
\alpha_k(\gamma_{i_2})\ldots\alpha_k(\gamma_{i_r})$ and
$\alpha_k(\gamma_{i_j})(P)=u_{\gamma_{i_j}}^kPu_{\gamma_{i_j}}^{k*}$
for any projection $P\in D_{n_k}$. Then:
\begin{align}
\alpha_k(\gamma)(T(\alpha,G)^k)=\left(u_{\gamma_{i_1}}^k
u_{\gamma_{i_2}}^k\ldots u_{\gamma_{i_r}}^k\right)
T(\alpha,G)^k\left(u_{\gamma_{i_1}}^k u_{\gamma_{i_2}}^k\ldots
u_{\gamma_{i_r}}^k\right)^*\label{in6}. \end{align} Both
$u_\gamma^k$ and $u_{\gamma_{i_1}}^k u_{\gamma_{i_2}}^k\ldots
u_{\gamma_{i_s}}^k$ are elements in $P_{n_k}$ so by (\ref{in3}):
\begin{align*}
d(u_\gamma^k,u_{\gamma_{i_1}}^k u_{\gamma_{i_2}}^k\ldots
u_{\gamma_{i_s}}^k)<\vp_1/4.
\end{align*}
Combined with (\ref{in6}), we have
$\mu_{n_k}\left(\alpha_k(\gamma) (T(\alpha,G)^k)\setminus
u_\gamma^k T(\alpha,G)^ku_\gamma^{k*}\right)<\vp_1/4$. Use
(\ref{in4}) to get $\mu_{n_k}\left(\alpha_k(\gamma)
(T(\alpha,G)^k)\setminus e_{l(\gamma
R),r}^k\right)<\vp_1/4+\vp_1/2<\vp_1$, so we have (\ref{e1}).

Let now $\gamma=\gamma_{i_1} \gamma_{i_2}\ldots\gamma_{i_s}$ and
$\delta=\delta_{j_1} \delta_{j_2}\ldots\delta_{j_t}$ such that
$\gamma R_G=\delta R_G$. Use (\ref{in3}) both for $\gamma$ and
$\delta$ to get:
\begin{align*}
||u_\delta^{k*}u_\gamma^k-(u_{\delta_{j_1}}^k
u_{\delta_{j_2}}^k\ldots u_{\delta_{j_t}}^k)^*(u_{\gamma_{i_1}}^k
u_{\gamma_{i_2}}^k\ldots u_{\gamma_{i_r}}^k)||_2<\vp_1
\end{align*}
As before, $u_\delta^{k*}u_\gamma^k$ and $(u_{\delta_{j_1}}^k
\ldots u_{\delta_{j_t}}^k)^*(u_{\gamma_{i_1}}^k\ldots
u_{\gamma_{i_r}}^k) $ are elements in $P_{n_k}$, so:
\begin{align*}
d\left(u_\delta^{k*}u_\gamma^k,(u_{\delta_{j_1}}^k
u_{\delta_{j_2}}^k\ldots u_{\delta_{j_t}}^k)^*(u_{\gamma_{i_1}}^k
u_{\gamma_{i_2}}^k\ldots u_{\gamma_{i_r}}^k)\right)<\vp_1/2,
\end{align*}
Restricting this inequality just to fixed points in set
$T(\alpha,G)^k$, we get:
\begin{align}
|Tr(T(\alpha,G)^ku_\delta^{k*}u_\gamma^k)-
Tr\left(T(\alpha,G)^k(u_{\delta_{j_1}}^k u_{\delta_{j_2}}^k\ldots
u_{\delta_{j_t}}^k)^*(u_{\gamma_{i_1}}^k u_{\gamma_{i_2}}^k\ldots
u_{\gamma_{i_r}}^k)\right)|<\vp_1/2\label{in7}.
\end{align}
Apply (\ref{in6}) for $\gamma$ and $\delta$ to get:
\begin{align*}
\mu_{n_k}\left(T(\alpha,G)^k\setminus \{y\in Y_k:
\alpha_k(\gamma)(y)= \alpha_k(\delta)(y)\}\right)=
\mu_{n_k}(T(\alpha,G)^k)-\\ Tr\left(T(\alpha,G)^k
(u_{\delta_{j_1}}^k u_{\delta_{j_2}}^k\ldots
u_{\delta_{j_t}}^k)^*(u_{\gamma_{i_1}}^k u_{\gamma_{i_2}}^k\ldots
u_{\gamma_{i_r}}^k)\right).
\end{align*}
This combined with (\ref{in7}) and(\ref{in5}) yields (\ref{e2}).

Assume now $\gamma R_G\neq\delta R_G$. Then:
\begin{align*}
\mu_{n_k}(T(\alpha,G)^k\setminus &\{y\in Y_k:
\alpha_k(\gamma)(y)\neq \alpha_k(\delta)(y)\})=\\
&Tr\left(T(\alpha,G)^k  (u_{\delta_{j_1}}^k
u_{\delta_{j_2}}^k\ldots u_{\delta_{j_t}}^k)^*(u_{\gamma_{i_1}}^k
u_{\gamma_{i_2}}^k\ldots u_{\gamma_{i_r}}^k)\right).
\end{align*}
Inequalities (\ref{in7}) and (\ref{in8}) will imply (\ref{e4}).
\end{proof}

\section*{Acknowledgements}

It is my great pleasure to thanks Florin R\u adulescu for many
discussions and ideas. Special thanks to my friend and colleague
Valerio Capraro for introducing me to the subject of \emph{Connes'
embedding problem} and for his study and work on the subject from
which I benefited. I am very grateful to Stefaan Vaes for numerous
remarks and corrections on previous versions of the paper and also
for considerably easier proofs for some results including lemma
\ref{linkabelian} and lemma \ref{zactions}. Parts of this article
were written during my stay in Leuven in Spring 2010. Also, I want
to thank Damien Gaboriau and Ken Dykema for useful remarks.

L. P\u AUNESCU, \emph{UNIVERSIT\`{A} DI ROMA TOR VERGATA and
INSTITUTE of MATHEMATICS "S. Stoilow" of the ROMANIAN ACADEMY}
email: liviu.paunescu@imar.ro

\end{document}